\def\jobis#1{FF\fi
  \def\predicate{#1}%
  \edef\predicate{\expandafter\strip@prefix\meaning\predicate}%
  \edef\job{\jobname}%
  \ifx\job\predicate
}
\if\jobis{proposal}%
\DeclareMathOperator{\Supp}{Supp}
\DeclareMathOperator{\Fix}{Fix}
 \numberwithin{equation}{subsection}
 \numberwithin{footnote}{subsection}
 \newtheorem{cor}[subsection]{Corollary}
 \newtheorem{lem}[subsection]{Lemma}
 \newtheorem{prop}[subsection]{Proposition}
 \newtheorem{thm}[subsection]{Theorem}
    \newtheoremstyle{upright}%
        {8pt plus2pt minus4pt}%
        {8pt plus2pt minus4pt}%
        {\upshape}%
        {}%
        {\bfseries\scshape}%
        {}%
        {1em}%
        {}%
\theoremstyle{upright}
 \newtheorem{exa}[subsection]{Example}
 \newtheorem{rem}[subsection]{Remark}
 \newcommand{\Q}{\mathbb Q}
 \newcommand{\R}{\mathbb R}
 \newcommand{\bir}{\dashrightarrow}
 \newcommand{\rddown}[1]{\left\lfloor{#1}\right\rfloor} 
\title{\large L\MakeLowercase{og canonical pairs with good augmented base loci}}
\thanks{2010 MSC: 14E30\\ Keywords: minimal models, log canonical rings, augmented base loci.}
\author{\large C\MakeLowercase{aucher} B\MakeLowercase{irkar and} Z\MakeLowercase{hengyu} H\MakeLowercase{u}}
\date{\today}
\begin{document}
\maketitle

\begin{abstract}
Let $(X,B)$ be a projective log canonical pair such that $B$ is a $\Q$-divisor, and that 
there is a surjective morphism $f\colon X\to Z$ onto a normal variety $Z$ satisfying: 
$K_X+B\sim_\Q f^*M$ for some big $\Q$-divisor $M$, and the augmented base locus 
${\bf{B_+}}(M)$ does not contain the 
image of any log canonical centre of $(X,B)$.
We will show that $(X,B)$ has a good log minimal model. An interesting special case
is when $f$ is the identity morphism.
\end{abstract}



\section{Introduction}

\textbf{Main results of this paper.}
We work over an algebraically closed field $k$ of characteristic zero.
For simplicity we will prove our results in the absolute projective case 
but they can be formulated and proved similarly in the relative case.

For a $\Q$-divisor $M$ on a normal projective variety $Z$, the stable base locus is 
denoted by ${\bf{B}}(M)$ and the augmented base locus by ${\bf{B_+}}(M)$. 
The latter is defined as ${\bf{B_+}}(M)=\bigcap_A {\bf{B}}(M-A)$ 
where the intersection runs over all ample $\Q$-divisors $A$.

Concerning the augmented base loci of divisors related to log canonical (lc) pairs 
we have the following statement which is one 
of the main results of this paper. 

\begin{thm}\label{t-main}
Let $(X,B)$ be a projective lc pair such that $B$ is a $\Q$-divisor, and that 
there is a surjective morphism $f\colon X\to Z$ onto a normal projective variety $Z$ satisfying: 

$\bullet$ $K_X+B\sim_\Q f^*M_Z$ for some big $\Q$-divisor $M_Z$,

$\bullet$ ${\bf{B_+}}(M_Z)$ does not contain the image of any lc centre of $(X,B)$.\\\\
Then, $(X,B)$ has a good log minimal model. In particular, the log canonical algebra 
$R(K_X+B)$ is finitely generated over $k$. 
\end{thm}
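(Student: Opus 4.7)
The plan is to proceed by induction on $\dim X$, the case $\dim X=1$ being immediate. I would first replace $(X,B)$ by a $\Q$-factorial dlt blowup $\phi\colon Y\to X$ with $K_Y+B_Y=\phi^{\ast}(K_X+B)$: the composition $f\circ\phi$ inherits both bullet conditions, since $\phi$-exceptional divisors of a dlt blowup correspond to lc places of $(X,B)$ and hence map to lc centres of $(X,B)$ under $\phi$, while strict transforms of lc centres have the same image in $Z$; and a good log minimal model for $(Y,B_Y)$ descends to one for $(X,B)$. After Stein-factorising $f$, I may therefore assume $(X,B)$ is $\Q$-factorial dlt and $f_{\ast}\mathcal O_X=\mathcal O_Z$.

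Next comes a Kodaira-type perturbation to convert the hypothesis into a big log canonical divisor without disturbing the lc centre structure. Write $M_Z\sim_\Q A+E$ with $A$ ample, $E\geq 0$, and $\Supp E\subseteq\mathbf{B_+}(M_Z)$; by hypothesis $\Supp f^{\ast}E$ contains no lc centre of $(X,B)$. A general-section argument on $f^{\ast}A$ (which is nef and, when pushed through Kodaira on $X$, can be split off an ample part) gives a further decomposition $f^{\ast}A\sim_\Q H+F$ with $H$ ample on $X$ and $F\geq 0$ also avoiding every lc centre. Setting $G:=F+f^{\ast}E$, one has $K_X+B\sim_\Q H+G$ and, for all sufficiently small $t>0$, the pair $(X,B+tG)$ is lc with \emph{exactly} the lc centres of $(X,B)$ (the log-discrepancy-zero valuations are unchanged, because $G$ vanishes along them), while $K_X+B+tG\sim_\Q H+(1+t)G$ is big.

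The inductive hypothesis is then applied on the lc centres. For a minimal lc centre $S\subseteq X$, dlt adjunction yields $K_S+B_S\sim_\Q g^{\ast}(M_Z|_{Z_S})$ with $g:=f|_S\colon S\to Z_S:=f(S)$ and $(S,B_S)$ lc. The condition $Z_S\not\subseteq\mathbf{B_+}(M_Z)$ forces $M_Z|_{Z_S}$ to be big, $\mathbf{B_+}(M_Z|_{Z_S})\subseteq\mathbf{B_+}(M_Z)\cap Z_S$, and no image of an lc centre of $(S,B_S)$---being an lc centre of $(X,B)$ contained in $S$---meets this set. After normalising $Z_S$, the hypotheses of Theorem~\ref{t-main} hold for $(S,B_S)\to Z_S$ in lower dimension, and by induction $(S,B_S)$ admits a good log minimal model; a parallel induction applies to the restriction of the perturbed pair, after absorbing the pullback part of $tG|_S$ into the moduli on $Z_S$ and treating the residual term $tF|_S$ as a small additional perturbation.

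Finally, since $K_X+B+tG$ is big and good log minimal models exist on every lc centre of $(X,B+tG)$, a lifting argument---running the $(K_X+B+tG)$-MMP with scaling and appealing to extension of pluricanonical sections along dlt strata, in the spirit of Hacon--Xu and Birkar---produces a good log minimal model of $(X,B+tG)$. An MMP with scaling of $tG$ starting from $(X,B)$ then transfers goodness back to $(X,B)$, yielding the desired good log minimal model; finite generation of $R(K_X+B)$ is a formal consequence of semiampleness on this model. The principal difficulty in the plan is precisely this last lifting step: translating goodness from the dlt strata to the ambient pair, which is where the augmented base locus hypothesis is fully exploited through the simultaneous preservation of lc centres, bigness of $K_X+B+tG$, and inductively produced good models on every stratum, and where most of the technical weight of the proof is concentrated.
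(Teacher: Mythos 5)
Your reduction to the big case breaks down, and this is a genuine gap rather than a presentational one. You write $M_Z\sim_\Q A+E$ with $A$ ample on $Z$ and then claim that $f^*A$ ``can be split off an ample part'' via Kodaira's lemma, concluding that $K_X+B+tG\sim_\Q H+(1+t)G$ is big on $X$. This is only possible when $f$ is generically finite: if $\dim Z<\dim X$ (e.g.\ $X=C\times \PP^1\to Z=C$), then $f^*A$ is nef with top self-intersection zero, hence not big, and no decomposition $f^*A\sim_\Q H+F$ with $H$ ample exists; likewise $K_X+B+tf^*(\text{anything})$ remains a pullback from $Z$ and is never big. The whole point of stating the theorem for a fibration $f\colon X\to Z$ rather than only for big $K_X+B$ (Corollary \ref{cor-1}) is to make the inductive restriction to lc centres work, and in that generality your perturbation strategy does not produce a big log divisor to run an MMP on. Separately, even your final step in the (hypothetical) big case --- ``a lifting argument \dots in the spirit of Hacon--Xu and Birkar'' --- is not an argument: extension of sections along dlt strata is exactly the hard technical content here, and you acknowledge but do not supply it.

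For comparison, the paper's proof splits into two cases. If some lc centre dominates $Z$, it restricts to a component $S$ of $\rddown{B}$ dominating $Z$, gets a good model of $(S,B_S)$ by induction, deduces finite generation of $R(M_Z)$ and hence of $R(K_X+B)$, and then produces a good model via Proposition \ref{p-fg}; no bigness of $K_X+B$ is ever needed. If every lc centre is vertical over $Z$, it does not try to make $K_X+B$ big; instead Lemma \ref{l-1} arranges $K_X+B\sim_\Q P+G+L$ with $G$ semi-ample, $\Supp P=\Supp\rddown{B}$, and $\Supp(G+L)$ avoiding all lc centres, then runs a two-parameter LMMP with scaling (Lemmas \ref{l-2} and \ref{l-4}) whose termination near $\rddown{B'}$ is obtained from special termination plus the inductively constructed good models on the strata (Lemma \ref{l-3}), and whose termination away from $\rddown{B'}$ follows from the numerical identity $G'+L'\sim_\Q K_{X'}+B'$ there. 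Your inductive treatment of the strata is in the right spirit and close to Lemma \ref{l-3}, but the mechanism that converts those lower-dimensional good models into termination and semi-ampleness upstairs is missing from your proposal, and the bigness claim on which your substitute mechanism rests is false in general.
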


The proof of the theorem is given in Section 5. The main difficulties of the proof are 
due to presence of lc singularities.  Perhaps 
it is a good place to emphasize that trying to understand lc pairs rather than Kawamata log terminal (klt) pairs 
(or just smooth varieties) is not simply for the sake of generality. It is often the case that 
failure to prove a statement 
about lc pairs of dimension $d$ comes from failure to understand certain aspects of 
smooth varieties in dimension $d-1$. For example if we cannot prove finite generation of 
lc rings of lc pairs of dimension $d$ it is because we do not know how to prove abundance 
for varieties of dimension $d-1$. To be more precise, finite generation of lc rings of 
lc pairs of dimension $d$ is
equivalent to the existence of good minimal models of $\mathbb Q$-factorial pseudo-effective 
dlt pairs of dimension $\leq d-1$ ( see Fujino-Gongyo [\ref{Fujino-Gongyo-lc-rings}] for more details).

\begin{cor}\label{cor-1}
Let $(X,B)$ be a projective lc pair such that $K_X+B$ is a big $\Q$-divisor and 
that ${\bf{B_+}}(K_X+B)$ 
does not contain any lc centre of $(X,B)$. Then, $(X,B)$ has a good log minimal model. 
In particular, the log canonical algebra $R(K_X+B)$ is finitely generated over $k$. 
\end{cor}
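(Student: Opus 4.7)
The plan is to derive the corollary as a direct specialisation of Theorem \ref{t-main}. Concretely, I would take $Z := X$, let $f \colon X \to Z$ be the identity morphism, and set $M_Z := K_X+B$. Since $(X,B)$ is an lc pair, $X$ is normal; $X$ is projective by hypothesis; and $f$ is tautologically surjective. Thus the ambient set-up needed by Theorem \ref{t-main} is in place without further work.

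Next I would check the two bullet-point hypotheses. The linear equivalence $K_X+B \sim_\Q f^\ast M_Z$ holds by construction, and $M_Z=K_X+B$ is big by assumption, so the first bullet holds. For the second, note that under $f=\mathrm{id}_X$ the image of an lc centre $V\subseteq X$ is $V$ itself, and $\mathbf{B_+}(M_Z)=\mathbf{B_+}(K_X+B)$. Hence the assumption that $\mathbf{B_+}(K_X+B)$ contains no lc centre of $(X,B)$ is exactly the statement that $\mathbf{B_+}(M_Z)$ contains no image of an lc centre. Theorem \ref{t-main} then yields a good log minimal model of $(X,B)$.

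Finally, I would deduce the finite generation statement. On a good log minimal model $(X',B')$, the divisor $K_{X'}+B'$ is semi-ample, so its section ring is a finitely generated $k$-algebra; since $(X,B) \bir (X',B')$ is a birational contraction of log minimal model type, the log canonical algebras $R(K_X+B)$ and $R(K_{X'}+B')$ coincide, giving the ``in particular'' statement. The only genuinely non-routine input is Theorem \ref{t-main} itself, so there is no independent obstacle to overcome here; the whole content of the corollary lies in recognising that the identity morphism already satisfies the hypotheses of the main theorem.
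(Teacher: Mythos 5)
Your proposal is correct and is exactly the paper's proof: the corollary is the special case $Z=X$, $f=\mathrm{id}_X$, $M_Z=K_X+B$ of Theorem \ref{t-main}, with the two bullet hypotheses verified tautologically. The additional remarks on finite generation simply unwind the ``in particular'' clause already contained in the theorem's statement.
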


The corollary follows immediately from Theorem \ref{t-main}. 
The klt case of the 
corollary follows easily from [\ref{BCHM}]. However, passing from klt to lc is often  
very subtle as pointed above.

One may wonder whether the 
corollary still holds if we assume that $(X,B)$ is log big instead of assuming that 
${\bf{B_+}}(K_X+B)$ does not contain lc centres of $(X,B)$. Here log big means 
that $K_X+B$ is big and $(K_X+B)|_S$ is also big for any lc centre $S$.
Example \ref{exa-log-big} below shows that one gets into serious trouble very quickly.

The corollary implies the following result which was conjectured 
in [\ref{Cacciola}] and first proved in 
[\ref{BH}, Theorem 1.6]: both papers put the extra assumption that $K_X+B+P$ birationally 
has a CKM-Zariski decomposition. 
Recall that a lc polarized pair $(X,B+P)$ consists 
of a lc pair $(X,B)$ together with a nef divisor $P$.

\begin{cor}\label{cor-2}
Let $(X,B+P)$ be a projective lc polarized pair such that $B,P$ are $\Q$-divisors, 
$K_X+B+P$ is big, and 
that the augmented base locus ${\bf{B_+}}(K_X+B+P)$ 
does not contain any lc centre of $(X,B)$. Then, the log canonical algebra $R(K_X+B+P)$
 is finitely generated over $k$.\\ 
\end{cor}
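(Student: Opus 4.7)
The plan is to deduce Corollary \ref{cor-2} from Corollary \ref{cor-1} by absorbing the nef polarization $P$ into the boundary via a small ample perturbation and then passing to the limit $\epsilon \to 0$.

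First, I would fix an ample $\Q$-divisor $H$ on $X$. Since $P$ is nef, $P+\epsilon H$ is ample for every rational $\epsilon>0$, and for $m$ sufficiently divisible the linear system $|m(P+\epsilon H)|$ is basepoint free. A general member $N\in|m(P+\epsilon H)|$ yields an effective $\Q$-divisor $P_\epsilon:=\frac{1}{m}N\sim_\Q P+\epsilon H$. Choosing $m$ large, the unique coefficient $\frac{1}{m}$ of $P_\epsilon$ is arbitrarily small and, by a Bertini-type argument, $(X,B+P_\epsilon)$ is lc with the same lc centres as $(X,B)$.

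Next I verify the hypotheses of Corollary \ref{cor-1} for $(X,B+P_\epsilon)$. The divisor $K_X+B+P_\epsilon \sim_\Q (K_X+B+P)+\epsilon H$ is big, and since adding an ample divisor can only shrink the augmented base locus,
\[
{\bf{B_+}}(K_X+B+P_\epsilon)={\bf{B_+}}((K_X+B+P)+\epsilon H)\subseteq {\bf{B_+}}(K_X+B+P),
\]
which by hypothesis contains no lc centre of $(X,B+P_\epsilon)$. Corollary \ref{cor-1} therefore produces a good log minimal model for $(X,B+P_\epsilon)$ and, in particular, finite generation of $R(K_X+B+P+\epsilon H)$.

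Finally I would pass to the limit $\epsilon\to 0$ to obtain finite generation of $R(K_X+B+P)$ itself. Using finiteness-of-models results of Shokurov-polytope type for lc pairs, the MMP for $(X,B+P_\epsilon)$ factors through a common birational contraction $\phi\colon X\bir X'$ for all sufficiently small rational $\epsilon$, on which $K_{X'}+B'+(P_\epsilon)'$ is semi-ample. Letting $\epsilon\to 0$ exhibits $K_{X'}+B'+P'$ as a limit of semi-ample divisors, hence nef, and a base-point-free style argument on $X'$ exploiting semi-ampleness of $(K_{X'}+B'+P')+\epsilon H'$ for all small $\epsilon>0$ promotes nefness to semi-ampleness; this yields a polarized good minimal model of $(X,B+P)$ and the desired finite generation. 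The main obstacle is exactly this limiting step: both the uniform behaviour of the MMP as $\epsilon\to 0$ and the upgrade from nefness to semi-ampleness of $K_{X'}+B'+P'$ are non-trivial in the lc setting, resting on polytope-of-models machinery together with an lc base-point-free theorem applied to the polarized limit, and it is precisely here that the delicate lc-versus-klt issues stressed in the introduction make themselves felt.
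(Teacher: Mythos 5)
Your first two steps are fine: perturbing $P$ to an effective ample $\Q$-divisor $P_\epsilon\sim_\Q P+\epsilon H$ with small coefficients and general support does produce an lc pair $(X,B+P_\epsilon)$ to which Corollary \ref{cor-1} applies, giving finite generation of $R(K_X+B+P+\epsilon H)$ for every small rational $\epsilon>0$. The genuine gap is the final limiting step, and you have correctly identified it as the crux without actually closing it. Finite generation of $R(K_X+B+P+\epsilon H)$ for all small $\epsilon>0$ does not formally imply finite generation of $R(K_X+B+P)$, and the machinery you invoke to bridge this --- finiteness of models / Shokurov polytopes for lc pairs, together with an lc base-point-free theorem upgrading the nef limit $K_{X'}+B'+P'$ to a semi-ample divisor --- is not available in the lc setting at this level of generality; indeed the introduction's point about lc versus klt is precisely that such statements are essentially equivalent to the open problems one is trying to avoid. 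As written, the proof does not go through.

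The paper sidesteps the limit entirely by choosing the perturbing divisor proportionally to $K_X+B+P$ rather than using an unrelated ample $H$. Since ${\bf{B_+}}(K_X+B+P)$ contains no lc centre, one can write $K_X+B+P\sim_\Q A+L$ with $A\ge 0$ ample, $L\ge 0$, and $\Supp(A+L)$ avoiding every lc centre of $(X,B)$. Perturbing by $\epsilon(A+L)$ instead of $\epsilon H$ (absorbing $P+\epsilon A$, which is ample, into the boundary via a general member with small coefficients, and adding $\epsilon L$) yields an lc pair $(X,\Delta)$ whose lc centres are among those of $(X,B)$ and for which
$$
K_X+\Delta\sim_\Q (1+\epsilon)(K_X+B+P).
$$
Thus ${\bf{B_+}}(K_X+\Delta)={\bf{B_+}}(K_X+B+P)$ avoids the lc centres of $(X,\Delta)$, Corollary \ref{cor-1} gives finite generation of $R(K_X+\Delta)$, and since this algebra is (up to truncation) the algebra of a positive rational multiple of $K_X+B+P$, finite generation of $R(K_X+B+P)$ follows immediately --- no limit, no polytope of models, no base-point-free theorem. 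If you replace your $\epsilon H$ by $\epsilon(A+L)$, your argument terminates after your second paragraph.
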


{\textbf{Sketch of the proof of Theorem \ref{t-main}.}} Roughly speaking we follow the ideas of 
the proof of [\ref{B-lc-flips}, Theorem 1.4]. The flexibility allowed in the statement of 
Theorem \ref{t-main} enables us to do induction on dimension. By comparison it seems hard to 
apply such arguments to prove Corollary \ref{cor-1}.  
Let $(X,B)$ and $f$ be as in the theorem. 
We can take a dlt blowup hence assume that $(X,B)$ is $\Q$-factorial dlt. 
If  there is a 
component $S$ of $\rddown{B}$ such that $S$ is mapped onto $Z$, then we can 
use induction to show that the algebra of $(K_X+B)|_S$ is finitely generated 
and from this we can derive finite generation of $R(K_X+B)$. Once we have this finite 
generation we can find a good log minimal model of $(X,B)$ (see Proposition \ref{p-fg}).
We can then assume that no component of $\rddown{B}$ is mapped onto $Z$.

By assumptions, we can write $M_Z\sim_\Q A_Z+L_Z$ where $A_Z\ge 0$ is ample and $L_Z\ge 0$, 
and $\Supp (A_Z+L_Z)$ does not contain the image of any lc centre of $(X,B)$. 
Let $A=f^*A_Z$ and $L=f^*L_Z$. After replacing $(X,B)$ and $f$ we can write 
$A\sim_\Q G+P$ where 
$G\ge 0$ is semi-ample containing no lc centre of $(X,B)$, $P\ge 0$, 
and $\Supp P=\Supp \rddown{B}$. Moreover, 
$K_X+B+ b G+aL$ is nef if $b-a\gg 0$ and $a\ge 0$ (see Lemma \ref{l-1}). 
 
Let $\epsilon_1>\epsilon_2>\cdots $ be a sequence of sufficiently small rational 
numbers with $\lim \epsilon_j=0$. For each $\epsilon_j$, 
we can run an LMMP on 
$K_X+B+\epsilon_j G+\epsilon_j L$ with scaling of $\alpha G$ for some large $\alpha$ such that 
the LMMP ends up with a good log minimal model $X_j'$ (see Lemma \ref{l-2}). We can make sure that 
the $X_j'$ are all isomorphic in codimension one. 

We can run an LMMP on $K_{X_1'}+B_1'$ 
with scaling of $\epsilon_1 G_1'+\epsilon_1 L_1'$ such that 
$\lim \lambda_i=0$ where $\lambda_i$ are the numbers appearing in the LMMP (see Lemma \ref{l-4}). 
Moreover, after some modifications and redefining the notation 
we can assume that  each $X_j'$ appears in some step of the latter LMMP. 
By applying special termination and induction we show that the LMMP terminates 
near $\rddown{B_1'}$ (see Lemma \ref{l-3}). This implies that the LMMP terminates 
everywhere because of the choice of $P,G,L$. This essentially gives the minimal model 
we are looking for because of our choice of the $\epsilon_j$ and the construction.\\

\textbf{Organization of the paper.} In Section 2, we bring together some basic definitions, 
conventions, and notation. In Section 3, we prove the existence of log minimal models 
for certain pairs. We will need these for the arguments presented in the later sections. In Section 4,  
we prove several lemmas in order to prepare for the proof of Theorem \ref{t-main}. 
Finally in Section 5, we give the proof of \ref{t-main}, \ref{cor-1}, and \ref{cor-2}.\\

\textbf{Acknowledgements.} The first author was supported by a Leverhulme grant, and the second 
author was supported by an EPSRC grant. We would like to thank the referee for the valuable corrections 
and suggestions.

\vspace{0.5cm}
\section{Preliminaries}

Let $k$ be an algebraically closed field of characteristic zero fixed throughout the paper. 
All the varieties will be over $k$ unless stated otherwise.\\ 

\subsection{Divisors.} 
First we introduce some \emph{notation}. If $X\bir X'$ is a birational map between normal projective varieties  
whose inverse does not contract divisors: if $M$ is an $\R$-divisor on $X$ 
we denote its birational transform on $X'$ by $M'$ or by $M_{X'}$.  
Another notation we use is when we have a birational map $X_1\bir X_2$ 
whose inverse does not contract divisors: if $M_1$ is a divisor on $X_1$ 
we denote its birational transform on $X_2$ by $M_2$. 
All these notations will be clear from the context.

Now let $X$ be a normal projective variety and $M$ a $\Q$-divisor on $X$. 
The \emph{stable base locus} of $M$ is defined as ${\bf{B}}(M)=\bigcap_N \Supp N$ 
where $N$ ranges over all effective $\Q$-divisors satisfying $N\sim_\Q M$.  
On the other hand, the \emph{augmented base locus} of $M$ is defined as
$$
{\bf{B_+}}(M)=\bigcap_A {\bf{B}}(M-A)
$$ 
where $A$ ranges over all ample $\Q$-divisors.
It is not difficult to see that ${\bf{B_+}}(M)={\bf{B}}(M-\epsilon A)$ for some 
sufficiently small $\epsilon>0$ if we fix the ample divisor $A$.

The \emph{divisorial algebra} associated to $M$ is defined as 
$$
{R}(X,M)=\bigoplus_{m\ge 0}H^0(X,\rddown{mM})
$$

For a given surjective morphism $f\colon X\to Z$ we say that $M$ is \emph{very exceptional} if 
$M$ is vertical$/Z$, that is, $\Supp M$ is not mapped onto $Z$, and  
if for any prime divisor $P$ on $Z$ there is a prime divisor $Q$ on $X$ which is 
not a component of $M$ but $f(Q)=P$.\\  

\subsection{Pairs and polarized pairs.} 
A \emph{pair} $(X,B)$ consists of a normal quasi-projective variety $X$ and an $\R$-divisor $B$ on $X$ with
coefficients in $[0,1]$ such that $K_X+B$ is $\mathbb{R}$-Cartier. In this paper we mostly deal with pairs 
with $B$ being a $\Q$-divisor.
For a prime divisor $D$ on some birational model of $X$ with a
nonempty centre on $X$, $a(D,X,B)$
denotes the log discrepancy. For definitions and standard results on singularities of pairs 
we refer to [\ref{Kollar-Mori}]. 

A \emph{polarized pair} $(X,B+P)$ consists of a projective 
pair $(X,B)$ and a nef $\R$-divisor $P$. We say that $(X,B+P)$ is lc when $(X,B)$ is lc.

A projective pair $(X,B)$ is called \emph{log big} when $K_X+B$ is big and for any lc centre 
$S$ of $(X,B)$ the pullback of $K_X+B$ to $S^\nu$ is big where $S^\nu\to S$ is the normalization. 
On the other hand we say that a projective pair $(X,B)$ is 
\emph{log abundant} when $K_X+B$ is abundant and for any lc centre 
$S$ of $(X,B)$ the pullback of $K_X+B$ to $S^\nu$ is abundant. Recall that a divisor $M$ is said to be abundant 
when $\kappa(M)=\kappa_\sigma(M)$ where $\kappa_\sigma$ is the numerical Kodaira dimension defined by 
Nakayama. Although one can make sense of this for $\R$-divisors but we only use the notion when 
$M$ is a $\Q$-divisor.\\

\subsection{Log minimal, weak lc, and log smooth models.}\label{d-mws}
A projective pair $(Y,B_Y)$ is a \emph{log birational model} of a projective pair $(X,B)$ if we are given a birational map
$\phi\colon X\bir Y$ and $B_Y=B^\sim+E$ where $B^\sim$ is the birational transform of $B$ and 
$E$ is the reduced exceptional divisor of $\phi^{-1}$, that is, $E=\sum E_j$ where $E_j$ are the
exceptional/$X$ prime divisors on $Y$. 
A log birational model $(Y,B_Y)$ is a  \emph{weak lc model} of $(X,B)$ if

$\bullet$ $K_Y+B_Y$ is nef, and

$\bullet$ for any prime divisor $D$ on $X$ which is exceptional/$Y$, we have
$$
a(D,X,B)\le a(D,Y,B_Y)
$$

 A weak lc model $(Y,B_Y)$ is a \emph{log minimal model} of $(X,B)$ if 

$\bullet$ $(Y,B_Y)$ is $\Q$-factorial dlt,

$\bullet$ the above inequality on log discrepancies is strict.\\

Let $(X,B)$ be a lc pair, and let $f\colon W\to X$ be a log resolution. 
 Let $B_W\ge 0$ be a boundary on $W$ so that 
$$
K_W+B_W=f^*(K_X+B)+E
$$ 
where $E\ge 0$ is exceptional$/X$ and the support of $E$ contains  
each prime exceptional$/X$ divisor $D$ on $W$ if $a(D,X,B)>0$. 
We call $(W/Z,B_W)$ a \emph{log smooth model} of $(X/Z,B)$. Note that the coefficients of 
the exceptional$/X$ prime divisors in $B_W$ are not necessarily $1$.\\

\subsection{LMMP with scaling.}
Let $(X_1,B_1+C_1)$ be a lc pair such that $(X_1,B_1)$ is $\Q$-factorial dlt, 
$K_{X_1}+B_1+C_1$ is nef, and $C_1\ge 0$. 
Now, by [\ref{B-m-model}, Lemma 3.1], either $K_{X_1}+B_1$ is nef or there is an extremal ray $R_1$ such
that $(K_{X_1}+B_1)\cdot R_1<0$ and $(K_{X_1}+B_1+\lambda_1 C_1)\cdot R_1=0$ where
$$
\lambda_1:=\inf \{t\ge 0~|~K_{X_1}+B_1+tC_1~~\mbox{is nef}\}
$$
If $K_{X_1}+B_1$ is nef or if $R_1$ defines a Mori fibre structure, we stop. 
Otherwise $R_1$ gives a divisorial 
contraction or a log flip $X_1\bir X_2$. We can now consider $(X_2,B_2+\lambda_1 C_2)$  where $B_2+\lambda_1 C_2$ is 
the birational transform 
of $B_1+\lambda_1 C_1$ and continue. 
 By continuing this process, we obtain a sequence of numbers $\lambda_i$ and a 
special kind of LMMP which is called the \emph{LMMP on $K_{X_1}+B_1$ with scaling of $C_1$}. 
Note that by definition $\lambda_i\ge \lambda_{i+1}$ for every $i$, and we usually put 
$\lambda=\lim_{i\to \infty} \lambda_i$.

\vspace{0.5cm}
\section{Minimal models and termination for certain pairs}

In this section, we will prove some results on minimal models and termination 
that we will need for the proof 
of Theorem \ref{t-main}. The arguments in this section are similar to those of 
[\ref{B-lc-flips}, Section 5] but since we cannot directly refer to the results of 
[\ref{B-lc-flips}, Section 5] we will write detailed proofs.

\begin{rem}
In this and later sections, we will apply [\ref{B-lc-flips}, Theorem 1.4] in several places. 
That theorem assumes the ACC for lc thresholds which is by now a theorem of [\ref{HMX}]. 
The ACC is not needed in the klt case.
\end{rem}

\begin{prop}\label{p-fg}
Let $(X,B)$ be a projective lc pair with $B$ a $\Q$-divisor, 
and $f\colon X\bir Z$ a rational map onto a normal projective variety 
$Z$ so that we have:

$\bullet$ $f$ is a projective morphism with connected fibres over some non-empty open subset $U\subseteq Z$,  

$\bullet$ $K_X+B\sim_\Q 0$ over $U$, 

$\bullet$ $\kappa(K_X+B)\ge \dim Z$, and 

$\bullet$ the algebra $R(K_X+B)$ is finitely generated over $k$.\\\\
Then $(X,B)$ has a good log minimal model.
\end{prop}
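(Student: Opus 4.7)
The plan is to exploit the finite generation of $R(K_X+B)$ to produce a Zariski-type decomposition of $K_X+B$, and then invoke [\ref{B-lc-flips}, Theorem 1.4] to run an LMMP that terminates with a good log minimal model.

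First, I would reduce to a log smooth model. Take a common log resolution $\phi\colon W\to X$ of $(X,B)$ that also resolves the indeterminacies of $f$, producing a morphism $g\colon W\to Z$. Form a log smooth model $(W,B_W)$ of $(X,B)$ in the sense of \ref{d-mws}, so that $K_W+B_W=\phi^*(K_X+B)+E$ with $E\ge 0$ exceptional$/X$. Since $E$ is effective and $\phi$-exceptional, the finite generation of $R(K_X+B)$ passes to $R(K_W+B_W)$, we have $\kappa(K_W+B_W)=\kappa(K_X+B)\ge \dim Z$, and $K_W+B_W\sim_\Q 0$ over $U$ via $g$. Any log minimal model of $(W,B_W)$ will descend to one of $(X,B)$ by standard negativity-lemma arguments, so it suffices to work with $(W,B_W)$.

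Next I would extract the Iitaka decomposition. Because $K_W+B_W\sim_\Q 0$ over $U$ and the general fibres of $g$ are positive-dimensional outside $Z$, we have $\kappa(K_W+B_W)\le \dim Z$, so equality holds. Finite generation of $R(K_W+B_W)$ then yields, after a further birational modification (absorbing any new exceptional divisors into the log smooth data), a decomposition
\[
K_W+B_W\sim_\Q A+F
\]
with $A\ge 0$ semi-ample (the pullback of an ample $\Q$-divisor from the canonical model, which is birational to $Z$) and $F\ge 0$ the stable fixed part, so $\Supp F\subseteq {\bf{B}}(K_W+B_W)$. Using the flexibility built into the definition of a log smooth model, I would further blow up $W$ and enlarge $B_W$ so that every component of $F$ lies in $\Supp\rddown{B_W}$.

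With this set-up, [\ref{B-lc-flips}, Theorem 1.4] applies: running the LMMP on $K_W+B_W$ with scaling of a sufficiently ample divisor terminates with a log minimal model $(Y,B_Y)$, since the decomposition $K_W+B_W\sim_\Q A+F$ with $\Supp F\subseteq \Supp\rddown{B_W}$ satisfies its hypotheses and all components of $F$ lie in the stable base locus and so must be contracted. On $(Y,B_Y)$ we obtain $F_Y=0$ and $K_Y+B_Y\sim_\Q A_Y$ is semi-ample, so $(Y,B_Y)$ is a good log minimal model of $(W,B_W)$, hence of $(X,B)$. The main obstacle I anticipate is the arrangement $\Supp F\subseteq \Supp\rddown{B_W}$ while simultaneously preserving the log smooth structure and the semi-ample decomposition: one must blow up carefully so that components of $F$ either already lie in $\Supp\rddown{B_W}$ or become exceptional divisors which can then be absorbed into the reduced boundary. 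Once this technical arrangement is in place, the invocation of [\ref{B-lc-flips}, Theorem 1.4] and the verification that the resulting model is good is routine.
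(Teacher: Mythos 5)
Your overall skeleton (pass to a log smooth model, use finite generation to write $K_W+B_W\sim_\Q A+F$ with $A$ pulled back from the ample model $T$ and $F$ the stable fixed part, contract $F$, and conclude semi-ampleness from $A$) matches the paper's, but two of your key steps do not work as stated. First, you cannot in general arrange $\Supp F\subseteq\Supp\rddown{B_W}$. In a log smooth model the coefficient of a non-exceptional prime divisor in $B_W$ is forced to equal its coefficient in $B$ (only exceptional divisors enjoy the ``flexibility'' of the definition), and the birational transform of a divisor on $X$ can never be made exceptional by further blowing up $W$. So any component of the fixed part of $K_X+B$ itself whose coefficient in $B$ is less than $1$ gives a component of $F$ that can never be absorbed into $\rddown{B_W}$. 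This is why the paper does not route the argument through [\ref{B-lc-flips}, Theorem 1.4] here; note also that that theorem requires $K+B\sim_\R M$ with $M\ge 0$ and $\Supp M\subseteq\rddown{B}$ over the base, whereas your $A+F$ contains the movable part $A$, so at best you could hope to apply it relatively over $T$, which still would only give a minimal model over $T$ rather than the absolute statement.

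Second, the assertion that the LMMP must contract all of $F$ because its components lie in the stable base locus is not a valid argument: membership in ${\bf{B}}(K_W+B_W)$ does not force the LMMP to perform divisorial contractions on those components. Arranging this contraction is the real content of the paper's proof: it first runs an LMMP over $T$ with scaling of an ample divisor and uses the hypothesis $K_X+B\sim_\Q 0$ over $U$ (together with [\ref{B-lc-flips}, Corollary 3.7 and Theorem 1.9]) to terminate over the image of $U$ in $T$ and conclude that the effective divisor $E'+F'$ becomes vertical over $T$; it then uses the fact that the identity $\Fix(m(K_{W'}+C'))=mE'+mF'$ survives the partial LMMP to deduce via [\ref{B-lc-flips}, Lemma 3.2] that $E'+F'$ is very exceptional over $T$; only then does [\ref{B-lc-flips}, Theorem 3.4] produce an LMMP over $T$ that contracts $E'+F'$, after which $K_{W''}+C''\sim_\Q A''$ is globally semi-ample. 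Your proposal uses the hypothesis $K_X+B\sim_\Q 0$ over $U$ only to compute $\kappa$, which signals the missing ingredient: without it there is no reason for $F$ to be vertical, let alone very exceptional, over $T$, and hence no mechanism to contract it.
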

\begin{proof}
Since $R(K_X+B)$ is a finitely generated $k$-algebra, there is a log resolution  
$g\colon W\to X$, a contraction $h\colon W\to T$,  
and a decomposition $g^*(K_X+B)=A+E$ where $E\ge 0$, $A$ is the pullback of some 
ample $\Q$-divisor on $T$, and for every sufficiently 
divisible integer $m>0$ we have $\Fix(mg^*(K_X+B))=mE$. 

Let $C$ be a rational boundary so that $(W,C)$ is a log smooth model of $(X,B)$ 
(as defined in \ref{d-mws}). 
We can write 
$$
K_W+C=g^*(K_X+B)+F=A+E+F
$$ 
where $F\ge 0$ is exceptional$/X$. In particular, for every sufficiently 
divisible integer $m>0$ we have 
$$
\Fix(m(K_W+C))=mE+mF
$$ 
Moreover, $K_W+C\sim_\Q E+F/T$. 

We may assume that the rational map $W\bir Z$ is a morphism. Since $K_X+B\sim_\Q 0$ over $U$, 
$g^*(K_X+B)\sim_\Q 0$ over $U$. So, $A\sim_\Q 0$ and $E\sim_\Q 0$ on the general fibres of 
$W\to Z$ which implies that such general fibres are contracted to points by $W\to T$. 
Therefore, by comparing the dimensions of the fibres of $W\to Z$ and $W\to T$ and by the 
other assumptions we get  
$$
\dim Z\le \kappa(g^*(K_X+B))=\dim T\le \dim Z
$$
which implies that 
$$
\kappa(K_W+C)=\kappa(g^*(K_X+B))=\dim T=\dim Z
$$ 
Thus, we have an induced birational map 
$\psi \colon Z\bir T$. The birationality comes from the fact that $W\to Z$ and $W\to T$ 
both have connected fibres. Perhaps after shrinking $U$ 
 we can assume that $\psi|_U$ is an isomorphism.

 Now run an LMMP$/T$ on $K_W+C$ with scaling of some ample divisor.  Since $K_X+B\sim_\Q 0$ over $U$, $(W,C)$ has a 
log minimal model over $\psi(U)$ by [\ref{B-lc-flips}, Corollary 3.7] hence the LMMP terminates over $\psi(U)$ 
by [\ref{B-lc-flips}, Theorem 1.9]. So, we arrive at a model $W'$ on which 
$$
K_{W'}+C'\sim_\Q E'+F'\sim_\Q 0
$$
 over $\psi(U)$ (recall that we use $E'$ to denote the birational transform of $E$; similar notation 
 for other divisors). 
In particular, since $E'+F'\ge 0$, this means that 
$E'+F'$ is vertical$/T$. Moreover, since $W\bir W'$ is a partial LMMP on $K_W+C$, for every sufficiently 
divisible integer $m>0$ we have 
$$
\Fix(m(K_{W'}+C'))=mE'+mF'
$$ 
which implies that $E'+F'$ is very exceptional$/T$ by [\ref{B-lc-flips}, Lemma 3.2]. 
Therefore, by [\ref{B-lc-flips}, Theorem 3.4], we can run an LMMP$/T$ on $K_{W'}+C'$ so that  
it contracts $E'+F'$ and so it terminates with a model $W''$ on which $K_{W''}+C''\sim_\Q 0/T$.  
In fact, $K_{W''}+C''\sim_\Q A''$ is semi-ample since $A''$ is the pullback of an ample $\Q$-divisor on $T$.
The pair $(W'',C'')$ is a good log minimal model of $(W,C)$ hence also a good log minimal model of 
$(X,B)$ by [\ref{B-lc-flips}, Remark 2.8].\\ 
\end{proof}

\begin{prop}[{cf. [\ref{Gongyo-Lehmann}]}]\label{p-klt-trivial}
Let $(X,B)$ be a projective klt pair and $f\colon X\to Z$ a contraction such that $K_X+B\sim_\R f^*M_Z$ for 
some big $\R$-Cartier divisor $M_Z$. Then, $(X,B)$ has a good log minimal model.
\end{prop}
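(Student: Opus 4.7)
The plan is to show that $K_X+B$ is abundant in the sense of Nakayama, and then invoke the theorem of Gongyo--Lehmann that a projective klt pair with abundant log canonical $\R$-divisor admits a good log minimal model.

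First I would apply Kodaira's lemma to the big $\R$-Cartier divisor $M_Z$ on the normal projective variety $Z$ to write $M_Z\sim_\R A_Z+N_Z$ with $A_Z$ an ample $\R$-divisor and $N_Z\ge 0$. Pulling back along $f$ gives
$$
K_X+B\sim_\R f^*A_Z+f^*N_Z,
$$
which exhibits $K_X+B$ as the sum of a semi-ample divisor and an effective divisor; in particular $K_X+B$ is pseudoeffective.

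Next I would verify that $K_X+B$ is abundant. Since $f$ is a contraction, $f_*\mathcal{O}_X=\mathcal{O}_Z$, and pullback induces equalities of spaces of global sections on sufficiently divisible integer approximations, so
$$
\kappa(K_X+B)=\kappa(f^*M_Z)=\kappa(M_Z)=\dim Z,
$$
the last equality because $M_Z$ is big. For the numerical Kodaira dimension, the semi-ample summand $f^*A_Z$ alone has $\kappa_\sigma=\dim Z$, giving $\kappa_\sigma(K_X+B)\ge\dim Z$; conversely, $K_X+B\sim_\R f^*M_Z$ is pulled back from $Z$, and the standard fact that $\kappa_\sigma$ is preserved under pullback by a surjective morphism yields $\kappa_\sigma(K_X+B)=\kappa_\sigma(M_Z)=\dim Z$. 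Hence $\kappa(K_X+B)=\kappa_\sigma(K_X+B)$, i.e.\ $K_X+B$ is abundant.

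The conclusion now follows directly from the Gongyo--Lehmann theorem: a projective klt pair whose log canonical $\R$-divisor is abundant has a good log minimal model. The hard part is this last step, whose proof goes through [BCHM], the canonical bundle formula, and a careful analysis of Nakayama's $\sigma$-decomposition along the Iitaka fibration; in our situation, however, checking its hypothesis reduces to the elementary pullback calculation above, so the substantive work is borrowed from their paper.
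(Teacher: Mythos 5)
Your argument is correct, but it takes a genuinely different route from the paper's. You verify that $K_X+B$ is abundant ($\kappa=\kappa_\sigma=\dim Z$, using the bigness of $M_Z$, the fact that $f$ is a contraction, and Nakayama's invariance of $\kappa_\sigma$ under pullback by surjective morphisms) and then invoke the theorem of [\ref{Gongyo-Lehmann}] that an abundant projective klt pair has a good minimal model. The hypothesis check is elementary and essentially right; the one point to handle with care is that $B$ and $M_Z$ are $\R$-divisors, so $\kappa$ must be taken in a form invariant under $\R$-linear equivalence (the invariant Iitaka dimension), which is how [\ref{Gongyo-Lehmann}] set things up, and the identity $\kappa_\sigma(f^*M_Z)=\kappa_\sigma(M_Z)$ should be attributed to Nakayama's book rather than treated as obvious. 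The paper instead gives a direct, self-contained construction: it applies the canonical bundle formula [\ref{Ambro-adjunction}][\ref{Fujino-Gongyo-adjunction}] to write $K_X+B\sim_\R f^*(K_Z+B_Z)$ with $(Z,B_Z)$ klt and $K_Z+B_Z$ big, takes a good log minimal model $(T,B_T)$ of the base via [\ref{BCHM}], passes to a klt log smooth model of $(X,B)$, and runs an LMMP over $T$ that contracts a very exceptional divisor using [\ref{B-lc-flips}, Theorem 3.4], so that the resulting log canonical divisor becomes the pullback of the semi-ample $M_T$. The two approaches rest on the same essential ingredients --- the canonical bundle formula and [\ref{BCHM}] --- since these are exactly what go into the proof of the Gongyo--Lehmann theorem; your version packages them inside the citation (which the paper acknowledges with its ``cf.~[\ref{Gongyo-Lehmann}]'' attribution), while the paper unpacks them, consistent with its stated aim in Section 3 of writing detailed proofs rather than relying on external statements in a slightly different form. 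What your route buys is brevity; what the paper's route buys is self-containedness and an explicit minimal model produced by a controlled LMMP over $T$.
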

\begin{proof}
By applying the canonical bundle formula [\ref{Ambro-adjunction}][\ref{Fujino-Gongyo-adjunction}], 
we can find a boundary $B_Z$ on $Z$ so that $(Z,B_Z)$ is klt and  
$$
K_{X}+B\sim_\R f^*(K_{Z}+B_Z)
$$
By assumptions, $K_{Z}+B_Z$ is big hence by [\ref{BCHM}] $(Z,B_Z)$ has a  
good log minimal model $(T,B_T)$.
Let $M_T$ on $T$ be the pushdown of $M_Z$ which is semi-ample by construction. There is a common 
resolution $d\colon V\to Z$ and $e\colon V\to T$ such that $d^*M_Z-e^*M_T$ is 
effective and exceptional$/T$.

Now take a log resolution $g\colon W\to X$ so that $h\colon W\bir V$ is a morphism, 
and let $G=h^*(d^*M_Z-e^*M_T)$. 
Let $C$ be a boundary so that $(W,C)$ is a klt log smooth model of $(X,B)$ as defined in 
\ref{d-mws}. We can write 
$$
K_W+C=g^*(K_X+B)+F\sim_\R g^*f^*M_Z+F=h^*d^*M_Z+F=h^*e^*M_T+G+F
$$ 
where $F\ge 0$ is exceptional$/X$. 

Let $U\subset Z$ be the largest open subset such that $\psi|_U$ is an isomorphism 
where $\psi$ is the birational map $Z\bir T$. Note that the codimension of 
$T\setminus \psi(U)$ is at least $2$, and $G=0$ over $U$.
Now run an LMMP$/T$ on $K_W+C$ with scaling of some ample divisor. Since $K_X+B\sim_\R 0/Z$, 
the LMMP terminates over $\psi(U)$ 
and $F$ is contracted over $\psi(U)$. So, we arrive at a model $W'$ on which 
$K_{W'}+C'\sim_\R G'+F'/T$ where $G'+F'=0$ over $\psi(U)$. 
Actually, by construction, $G'+F'$ is mapped into $T\setminus \psi(U)$, in particular, $G'+F'$ is very exceptional$/T$. 
 Therefore, by [\ref{B-lc-flips}, Theorem 3.4], we can run an LMMP$/T$ on $K_{W'}+C'$ so that  
it contracts $G'+F'$ and so it terminates with a model $W''$ on which $K_{W''}+C''$ 
is $\R$-linearly equivalent to the pullback of $M_T$ hence it is 
semi-ample. Now $(W'',C'')$ is a good log minimal model of both $(W,C)$ and $(X,B)$.\\ 
\end{proof}

\begin{prop}\label{p-m-model}
Let $(X,\Delta+C)$ be a projective $\Q$-factorial klt pair where $\Delta,C\ge 0$ are $\Q$-Cartier. 
Let $f\colon X\bir Z$ be a rational map onto a normal projective variety 
$Z$ so that we have:

$\bullet$ $f$ is a projective morphism with connected fibres over some non-empty open subset $U\subseteq Z$,  

$\bullet$ $K_X+\Delta\sim_\Q 0$ over $U$, and $C=0$ over $U$, 

$\bullet$ $K_X+\Delta+C$ is nef, and 

$\bullet$ $\kappa(K_X+\Delta)\ge \dim Z$.\\\\ 
Then, for any real number $0\le t\le 1$,  
 we can run an LMMP on $K_X+\Delta+tC$ with scaling of $(1-t)C$ 
which terminates with a good log minimal model of $(X,\Delta+tC)$.
\end{prop}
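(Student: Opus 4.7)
The plan is to reduce to an application of Proposition~\ref{p-fg}: I will first show that $(X,\Delta+tC)$ satisfies its hypotheses, thereby producing a good log minimal model, and then argue that the specified LMMP with scaling of $(1-t)C$ terminates at one. Since $C=0$ and $K_X+\Delta\sim_\Q 0$ over $U$, we have $K_X+\Delta+tC\sim_\Q 0$ over $U$. Moreover sections of $m(K_X+\Delta+tC)$ restrict trivially to each general fibre of $f$, hence descend to $Z$, giving $\kappa(K_X+\Delta+tC)\le\dim Z$; combined with $\kappa(K_X+\Delta+tC)\ge\kappa(K_X+\Delta)\ge\dim Z$, equality holds.

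The remaining hypothesis of Proposition~\ref{p-fg} is finite generation of $R(K_X+\Delta+tC)$. To establish it, I take a log resolution $g\colon W\to X$ resolving the rational map $f$ into a morphism $h=f\circ g\colon W\to Z$ and pass to a klt log smooth model $(W,B_W)$ of $(X,\Delta+tC)$, noting that $R(K_W+B_W)=R(K_X+\Delta+tC)$. The canonical bundle formula of Ambro and Fujino--Gongyo applied to the Iitaka fibration of $K_W+B_W$ (which by the dimension count above is birational to $h$) produces a klt pair $(Z',B_{Z'})$ on a suitable birational model $Z'$ of $Z$ with $K_{Z'}+B_{Z'}$ big, and its log canonical ring is isomorphic to $R(K_W+B_W)$ up to truncation. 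Finite generation of $R(K_{Z'}+B_{Z'})$ follows from [\ref{BCHM}], so Proposition~\ref{p-fg} applies and yields a good log minimal model of $(X,\Delta+tC)$.

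To conclude, I must check that the specified LMMP terminates at such a model. Writing $\mu_i=t+\lambda_i(1-t)\in[t,1]$, the LMMP on $K_X+\Delta+tC$ with scaling of $(1-t)C$ is precisely the LMMP on $K_X+\Delta$ with scaling of $C$ truncated once $\mu_i$ first reaches $t$; its well-posedness rests on the nefness of $K_X+\Delta+C$ via the convex combination identity $K_X+\Delta+sC=\frac{1-s}{1-\mu}(K_X+\Delta+\mu C)+\frac{s-\mu}{1-\mu}(K_X+\Delta+C)$ for $s\in[\mu,1]$, which also shows that nefness is preserved along the interval $[\mu_i,1]$ at each stage. With the good log minimal model from the previous step in hand, termination results for LMMP with scaling such as [\ref{B-lc-flips}, Theorem~1.9] ensure the process stops at a log minimal model of $(X,\Delta+tC)$, which is automatically good because goodness passes between $\Q$-factorial dlt log minimal models of the same pair. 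The main obstacle is the second step: applying the canonical bundle formula in a setting where $K_X+\Delta+tC$ is only $\Q$-trivial over $U\subseteq Z$ rather than over all of $Z$; this is handled by passing through the Iitaka model and absorbing the exceptional contributions on $W$.
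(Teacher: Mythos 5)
Your reduction to Proposition \ref{p-fg} does produce a good log minimal model of $(X,\Delta+tC)$ when $t$ is rational (though the detour through the canonical bundle formula is unnecessary: $(X,\Delta+tC)$ is klt with $\Q$-boundary, so $R(K_X+\Delta+tC)$ is finitely generated directly by [\ref{BCHM}]). The genuine gap is in your termination step. [\ref{B-lc-flips}, Theorem 1.9] does not say that an LMMP on $K_X+\Delta+tC$ with scaling of $(1-t)C$ terminates once $(X,\Delta+tC)$ has a log minimal model; it requires a log minimal model of the pair at the \emph{limiting} coefficient, i.e.\ of $(X,\Delta+\mu C)$ with $\mu=\lim\bigl(t+\lambda_i(1-t)\bigr)$, together with the condition that this limit is not attained by any $\lambda_j$. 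A priori the scaling numbers could converge to some $\mu>t$, in which case your minimal model of $(X,\Delta+tC)$ is of no help (only scaling by an \emph{ample} divisor lets one get away with a minimal model of the pair itself). Controlling this limit is exactly where the paper's proof does its real work: at each stage it uses Proposition \ref{p-fg} to show that the nef divisor $K_{X_i}+\Delta_i+\lambda_i C_i$ is semi-ample, contracts to the associated model $V_i$, runs a relative LMMP over $V_i$ (terminating by [\ref{B-lc-flips}, Theorem 1.4]), and deduces that $K_{X_{i+1}}+\Delta_{i+1}+\tau C_{i+1}$ is semi-ample for some $\tau<\lambda_i$, so the thresholds strictly decrease and $\lambda\neq\lambda_j$ for all $j$; at the limit it identifies $K_{X_{i+1}}+\Delta_{i+1}+\lambda C_{i+1}\sim_\R 0$ over a birational model of $Z$ and invokes Proposition \ref{p-klt-trivial} to supply the required minimal model at level $\lambda$. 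None of this appears in your argument, and without it the appeal to Theorem 1.9 does not go through.

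A secondary issue: the statement allows $t$ to be any real number in $[0,1]$, whereas Proposition \ref{p-fg} and finite generation are stated for $\Q$-divisors, so your argument only covers rational $t$. The paper sidesteps this by running a single LMMP on $K_X+\Delta$ with scaling of $C$ all the way down and reading off log minimal models of $(X,\Delta+tC)$ for every real $t$ from the models produced along the way, using Proposition \ref{p-klt-trivial} at the limiting values.
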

\begin{proof}
We will show that we can run an LMMP on $K_X+\Delta$ with scaling of $C$ 
which terminates. This in particular shows that $(X,\Delta+tC)$ has a log minimal model 
for each $t\in [0,1]$. That fact that the log minimal model is good will follow from the construction and 
Propositions \ref{p-fg} and \ref{p-klt-trivial} and the finite generation result of 
[\ref{BCHM}]. 

Put $X_1:=X$, $\Delta_1:=\Delta$, and $C_1:=C$. Let $\lambda_1\ge 0$ be the 
smallest number such that $K_{X_1}+\Delta_1+\lambda_1 C_1$ is nef. By [\ref{B-m-model}, Lemma 3.1], 
$\lambda_1$ is a rational number. We may assume that $\lambda_1>0$. 
Since $R(K_{X_1}+\Delta_1+\lambda_1 C_1)$ is a finitely generated $k$-algebra, 
and since $K_{X_1}+\Delta_1+\lambda_1 C_1$ is nef, by Proposition \ref{p-fg}, 
$K_{X_1}+\Delta_1+\lambda_1 C_1$ is semi-ample hence it defines a contraction $X_1\to V_1$. 
Note that $V_1$ is birational to $Z$ since $\kappa(K_{X_1}+\Delta_1+\lambda_1 C_1)=\dim Z$ 
which can be seen as in the proof of Proposition \ref{p-fg}. In particular, 
$K_{X_1}+\Delta_1\sim_\Q 0$ and $C_1=0$ over some non-empty open subset of $V_1$.

Run the LMMP$/V_1$ on $K_{X_1}+\Delta_1$ with scaling of 
an ample$/V_1$ divisor. This terminates 
with a good log minimal model $X_2$ of $(X_1,\Delta_1)$ over $V_1$ by [\ref{B-lc-flips}, Theorem 1.4].  
So, $K_{X_2}+\Delta_2$ is semi-ample$/V_1$.  
Now 
since $K_{X_2}+\Delta_2+\lambda_1 C_2$ is the pullback of some ample divisor on $V_1$, 
$$
K_{X_2}+\Delta_2+\lambda_1 C_2+\delta (K_{X_2}+\Delta_2)
$$ 
is semi-ample for some sufficiently small $\delta>0$. 
In other words, $K_{X_2}+\Delta_2+\tau C_2$ is semi-ample for some  
$\tau<\lambda_1$. 
We can consider $X_1\bir X_2$ as a partial 
LMMP on $K_{X_1}+\Delta_1$ with scaling of $ \lambda_1C_1$.

We can continue the process. That is, let $\lambda_2\ge 0$ be the smallest number such that 
$K_{X_2}+\Delta_2+\lambda_2 C_2$ is nef, and so on (note that $\lambda_1>\tau\ge \lambda_2$).
 This process is an LMMP on $K_X+\Delta$ with scaling of $C$. 
 The numbers $\lambda_i$ that appear in the LMMP satisfy $\lambda:=\lim_{i\to \infty}\lambda_i\neq \lambda_j$ 
for any $j$. The LMMP terminates by [\ref{B-lc-flips}, Theorem 1.9] 
if we show that $(X,\Delta+\lambda C)$ has a log minimal model. 

Remember that $K_{X_2}+\Delta_2$ is semi-ample$/V_1$ hence it defines a contraction 
$X_2\to W_2/V_1$ so that $K_{X_2}+\Delta_2\sim_\Q 0/W_2$. Moreover, by construction, 
$W_2\to V_1$ is birational and 
$$
K_{X_2}+\Delta_2+\lambda_1 C_2\sim_\Q 0/W_2
$$ 
Therefore, 
$$
K_{X_2}+\Delta_2+\lambda C_2\sim_\R 0/W_2
$$ 
and by Proposition \ref{p-klt-trivial}, 
$({X_2},\Delta_2+\lambda C_2)$ has a good log minimal model which is also a good log minimal model 
of $(X,\Delta+\lambda C)$.
\end{proof}

\vspace{0.5cm}
\section{Preparations for the proof of Theorem \ref{t-main}}

In this section, we give the necessary preparations for the proof of Theorem \ref{t-main}.
We recommend the reader to read the sketch of proof of Theorem \ref{t-main} given in the 
introduction before continuing.

\begin{lem}\label{l-1}
Let $(X,B)$ and $f$ be as in Theorem \ref{t-main} with the extra assumption that every lc centre 
of $(X,B)$ is vertical$/Z$. Then, we can replace $(X,B)$ and $f$ so that we have the following 
additional properties: 

$\bullet$ $(X,B)$ is $\Q$-factorial dlt, and $f$ is a contraction,

$\bullet$ $M_Z\sim_\Q A_Z+L_Z$ where $A_Z\ge 0$ is ample and $L_Z\ge 0$, 

$\bullet$ $\Supp (A_Z+L_Z)$ does not contain the image of any lc centre of $(X,B)$,

$\bullet$ Letting $A=f^*A_Z$, $L=f^*L_Z$, we can write $A\sim_\Q G+P$ where 
$G\ge 0$ is semi-ample containing no lc centre of $(X,B)$,  

$\bullet$ $P\ge 0$, $\Supp P=\Supp \rddown{B}$, and 

$\bullet$ $K_X+B+ b G+aL$ is nef if $b\gg a\ge 0$. 
\end{lem}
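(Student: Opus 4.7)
The plan is to proceed in three stages. First, I would reduce to the case where $(X,B)$ is $\Q$-factorial dlt and $f$ is a contraction, by taking a $\Q$-factorial dlt blowup $\pi\colon X'\to X$ with $K_{X'}+B'=\pi^*(K_X+B)$ and replacing $f$ with the Stein factorization of $f\circ\pi$. The hypotheses of Theorem~\ref{t-main} are preserved: augmented base loci pull back well under finite covers of $Z$, and the exceptional divisors of $\pi$ are lc places whose centres on $X$ are lc centres of $(X,B)$, hence vertical over $Z$ by assumption. Relabel and continue.

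Second, I would produce the decomposition $M_Z\sim_\Q A_Z+L_Z$. Fix an ample Cartier divisor $H$ on $Z$ and pick $\epsilon>0$ small enough that ${\bf B}(M_Z-\epsilon H)={\bf B_+}(M_Z)$. Since each image $f(V)$ of an lc centre $V$ lies outside ${\bf B_+}(M_Z)$, and since for sufficiently divisible $m$ the base locus $\Bs|m(M_Z-\epsilon H)|$ coincides with ${\bf B}(M_Z-\epsilon H)$ set-theoretically, a Bertini argument yields a general effective $L_Z\sim_\Q M_Z-\epsilon H$ with $\Supp L_Z$ avoiding every $f(V)$. Similarly I would choose a general effective ample $A_Z\sim_\Q\epsilon H$ with $\Supp A_Z$ avoiding every $f(V)$. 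Setting $A=f^*A_Z$ and $L=f^*L_Z$, no lc centre of $(X,B)$ is contained in $\Supp A$ or $\Supp L$, since otherwise its image would lie in $\Supp A_Z\cup\Supp L_Z$.

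Third, and the technically hardest part, I would produce the decomposition $A\sim_\Q G+P$ with $\Supp P=\Supp\rddown{B}$ and verify the nefness claim. The strategy is to find positive rationals $\epsilon_i$, one for each prime component $D_i$ of $\rddown{B}$, so that $P:=\sum_i\epsilon_i D_i$ is $\Q$-linearly equivalent to the pullback $f^*P_Z$ of some $\Q$-divisor $P_Z$ on $Z$; equivalently, $P\cdot C=0$ for every $f$-contracted curve $C$. By the relative Zariski lemma (negativity of the intersection form on vertical divisors restricted to fibers), such positive $\epsilon_i$ exist after possibly a further dlt modification that enlarges $\rddown{B}$ by exceptional divisors completing each affected fiber. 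Then $A-P\sim_\Q f^*(A_Z-P_Z)$; taking the $\epsilon_i$ small enough keeps $A_Z-P_Z$ ample, so a general effective ample $G_Z\sim_\Q A_Z-P_Z$ has support avoiding every $f(V)$, and $G:=f^*G_Z$ is effective semi-ample with no lc centre in $\Supp G$. For nefness I compute
\[
K_X+B+bG+aL \sim_\Q (b+1)G+P+(a+1)L \sim_\Q f^*\bigl((b+1)(A_Z-P_Z)+P_Z+(a+1)L_Z\bigr),
\]
and for $b\gg a\ge 0$ the $Z$-class inside the pullback rewrites as $(b+1)\bigl((A_Z-P_Z)+\tfrac{1}{b+1}(P_Z+(a+1)L_Z)\bigr)$, which is ample by openness of the ample cone for $b$ large; hence its pullback is nef. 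The main obstacle is arranging the relative numerical triviality $P\sim_\Q f^*P_Z$ with $\Supp P=\Supp\rddown{B}$, driving the need for a careful further dlt modification.
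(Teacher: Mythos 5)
Your first two stages (dlt blowup plus Stein factorization, and the Bertini decomposition $M_Z\sim_\Q A_Z+L_Z$ avoiding images of lc centres) are fine and agree with the paper. The gap is in the third stage, and it is a genuine one: you want $P=\sum_i\epsilon_i D_i$ with $\epsilon_i>0$, $\Supp P=\Supp\rddown{B}$, and $P$ numerically trivial over $Z$, so that $G=A-P$ descends to an ample class on $Z$. This is impossible in general over the \emph{given} base $Z$. If some component $D_i$ of $\rddown{B}$ maps to a subset of $Z$ of codimension $\ge 2$, the negativity lemma forbids any nonzero effective such combination from being numerically trivial over $Z$. If $D_i$ dominates a prime divisor of $Z$ but is only one component of the fibre over it, the Zariski lemma forces the kernel of the intersection form to be spanned by the \emph{whole} fibre, so you would need the missing components to enter $\rddown{B}$ with coefficient one; a dlt modification cannot do this, since those components are already divisors on $X$ with coefficient $<1$ in $B$ (they are not lc places of $(X,B)$), and raising their coefficients would change the pair, create new lc centres, and destroy the hypothesis that $\Supp(A_Z+L_Z)$ avoids all lc centres. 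So the phrase ``after possibly a further dlt modification that enlarges $\rddown{B}$'' does not produce a valid argument even for a surface fibred over a curve with $\rddown{B}$ a single component of a reducible fibre.

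The paper's mechanism for exactly this step is different and is the heart of the lemma: set $\Delta=B-\epsilon\rddown{B}$ (klt, and $\sim_\Q 0$ over the generic point of $Z$ since $\rddown{B}$ is vertical), run an LMMP on $K_X+\Delta$ \emph{over} $Z$ using [B-lc-flips, Theorem 1.4] to reach a model $X'$ where $K_{X'}+\Delta'$ is semi-ample over $Z$, defining a new intermediate base $f'\colon X'\to Z'\to Z$ with $K_{X'}+\Delta'\sim_\Q f'^*N_{Z'}$. Then $\epsilon\rddown{B'}\sim_\Q(K_{X'}+B')-f'^*N_{Z'}$ is itself $\Q$-linearly a pullback from $Z'$, and after a dlt blowup one takes $P$ proportional to (the pullback of) $\epsilon\rddown{B'}$ and $G$ the pullback of an ample divisor on $Z'$ (namely $R_{Z'}+\delta N_{Z'}$ for small $\delta$), not on $Z$. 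All the relevant divisors are then pullbacks from $Z'$, which gives both $\Supp P=\Supp\rddown{B}$ and the nefness of $K_X+B+bG+aL$ for $b\gg a\ge 0$. In short, the missing idea is to change the base to the relative ample model of $K_X+B-\epsilon\rddown{B}$; this requires the relative MMP with its termination, a substantially deeper input than the Zariski lemma. Your final nefness computation would be correct once a decomposition with $G$ pulled back from an ample divisor (on $Z'$) is in place, but as written the construction it depends on does not exist.
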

\begin{proof}
We can take a dlt blowup and assume that $(X,B)$ is $\Q$-factorial dlt. 
Since $K_X+B\sim_\Q f^*M_Z$ and ${\bf{B_+}}(M_Z)$ does not contain the image
of any lc centre of $(X,B)$, we can write $M_Z\sim_\Q R_Z+S_Z$ where 
$R_Z\ge 0$ is ample, $S_Z\ge 0$, and $\Supp (R_Z+S_Z)$ does not contain the image 
of any lc centre of $(X,B)$. 
By taking the Stein factorization we may assume 
$f$ is a contraction.

On the other hand, if $0<\epsilon\ll 1$ is a rational number, then 
$$
K_X+\Delta:=K_X+B-\epsilon \rddown{B}
$$ 
is klt and $K_X+\Delta\sim_\Q 0$ 
over some non-empty open subset $U$ of $Z$ because $\rddown{B}$ is vertical$/Z$.
Thus, by [\ref{B-lc-flips}, Theorem 1.4] 
we can run an LMMP$/Z$ on $K_X+\Delta$ ending up with a good 
log minimal model ${X'}$ over $Z$. Let ${f}'\colon {X'}\to {Z'}/Z$ 
be the contraction associated to $K_{{X'}}+{\Delta'}$ 
and write $K_{{X'}}+{\Delta'}\sim_\Q {f'}^*N_{{Z'}}$ for some $N_{{Z'}}$.  
Since $K_X+B\sim_\Q 0/Z$, $K_{{X'}}+{B'}$ is lc. Moreover, 
$$
K_{{X'}}+{B'}\sim_\Q \epsilon\rddown{{B'}}+{f'}^*N_{{Z'}}
$$
Now let $(X'',B'')$ be a dlt blowup of $({{X'}},{B'})$ and let $g\colon X''\to {X'}$ be  
the corresponding morphism. Let $Q''=g^*\epsilon\rddown{{B'}}$ and let 
$N''=g^*f'^*N_{{Z'}}$. Then, 
$$
K_{X''}+B''\sim_\Q Q''+N''
$$ 
and since 
$({{X'}},{B'}-\epsilon \rddown{{B'}})$ is klt, $\Supp Q''=\Supp \rddown{B''}$. 

Since $R_Z$ is ample and $N_{{Z'}}$ 
is ample$/Z$, if $\delta>0$ is a sufficiently small rational number, then 
$R''+\delta N''$ is semi-ample where $R''$ is the pullback of $R_Z$. Let 
$S''$ be the pullback of $S_Z$. Now  from 
$$
Q''+N''\sim_\Q K_{X''}+B''\sim_\Q R''+S''
$$ 
we obtain  
\begin{equation*}
\begin{split}
K_{X''}+B'' & \sim_\Q  \frac{1}{1+\delta} ( R''+ S''+\delta Q''+\delta N'')\\
& = \frac{\delta}{1+\delta} Q''+\frac{1}{1+\delta}( R''+\delta N'')+\frac{1}{1+\delta}S''
\end{split}
\end{equation*}

By putting $P'':=\frac{\delta}{1+\delta} Q''$, taking a general $G''\sim_\Q \frac{1}{1+\delta}( R''+\delta N'')$, and 
letting $L'':=\frac{1}{1+\delta}S''$, we get 
$$
K_{X''}+B''\sim_\Q P''+G''+L''
$$ 

Let $L_Z=\frac{1}{1+\delta}S_Z$ and 
let $A_Z\sim_\Q M_Z-L_Z$ be general. Then, $A_Z$ is ample and 
$\Supp (A_Z+L_Z)$ does not contain the image of any lc centre of $(X'',B'')$. 
Moreover, if we let $A''$ be the pullback of $A_Z$, then by construction, 
$A''\sim_\Q P''+G''$, $G''$ is semi-ample whose support does not contain 
any lc centre of $(X'',B'')$, and $\Supp P''=\Supp \rddown{B''}$. Finally, 
since $G''$ is the pullback of an ample divisor on  ${Z'}$ and 
since $K_{X''}+B''$ and $L''$ are also pullbacks of certain divisors on ${Z'}$, 
it is clear that $K_{X''}+B''+ bG''+aL''$ is nef if $b\gg a\ge 0$.
 Now  
replace $(X,B)$ with $(X'',B'')$ and put $P=P''$, $G=G''$, $A=A''$, and $L=L''$.\\
\end{proof}

\begin{lem}\label{l-2}
Assume that $(X,B)$ and $f$ satisfy the assumptions and the properties listed in Lemma \ref{l-1}. 
Let $0<\epsilon\ll 1$ and $\alpha\gg 0$ be rational numbers 
so that $K_X+B+\epsilon G+\epsilon L+\alpha G$ is nef. Then, we can run an LMMP on 
$K_X+B+\epsilon G+\epsilon L$ with scaling of $\alpha G$ which terminates 
with a good log minimal model of $(X,B+\epsilon G+\epsilon L)$.
\end{lem}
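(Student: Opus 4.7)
The plan is to reduce the LMMP on the dlt pair $(X, B+\epsilon G+\epsilon L)$ to one on a klt pair via the identities of Lemma \ref{l-1}, apply the technique of Proposition \ref{p-klt-trivial} to produce a good log minimal model, and deduce termination from [\ref{B-lc-flips}, Theorem 1.9].

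First, for $\epsilon>0$ sufficiently small, the pair $(X,B+\epsilon G+\epsilon L)$ is $\Q$-factorial dlt because, by Lemma \ref{l-1}, $G$ and $L$ contain no lc centre of $(X,B)$. The hypothesis that $K_X+B+\epsilon G+\epsilon L+\alpha G$ is nef makes the LMMP with scaling of $\alpha G$ well-defined, with scaling numbers $\lambda_i$ rational by [\ref{B-m-model}, Lemma 3.1]. Using $A\sim_\Q G+P$ and $K_X+B\sim_\Q A+L$, I would rewrite
$$
K_X+B+\epsilon G+\epsilon L\;\sim_\Q\;K_X+(B-\epsilon P)+\epsilon A+\epsilon L\;=\;K_X+\Delta+\epsilon f^*M_Z,
$$
where $\Delta:=B-\epsilon P$. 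Since $\Supp P=\Supp\rddown{B}$, each component of $\rddown{B}$ appears in $\Delta$ with coefficient strictly less than $1$, so $(X,\Delta)$ is $\Q$-factorial klt. Taking a general effective $A^*\sim_\Q \epsilon A$ (available because $A=f^*A_Z$ is the pullback of an ample divisor whose support avoids images of lc centres), the klt pair $(X,\Delta+A^*+\epsilon L)$ has the same numerical log canonical class as $(X,B+\epsilon G+\epsilon L)$, so the two pairs undergo the same LMMP with scaling of $\alpha G$.

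Next, I would observe that
$$
K_X+\Delta+A^*+\epsilon L\;\sim_\Q\;(1+\epsilon)f^*M_Z-\epsilon P,
$$
and that on a general fibre of $f$ this restricts to zero, since $P$ is vertical over $Z$ (its components are lc centres of $(X,B)$, all vertical by Lemma \ref{l-1}). Applying the canonical bundle formula as in the proof of Proposition \ref{p-klt-trivial}, we obtain a klt pair $(Z,B_Z)$ with $K_X+\Delta+A^*+\epsilon L\sim_\R f^*(K_Z+B_Z)$ and $K_Z+B_Z$ big. The argument of that proposition then produces a good log minimal model of $(X,\Delta+A^*+\epsilon L)$, hence of $(X,B+\epsilon G+\epsilon L)$. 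Termination of the specific LMMP with scaling of $\alpha G$ now follows from [\ref{B-lc-flips}, Theorem 1.9].

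The main obstacle is adapting Proposition \ref{p-klt-trivial} in the presence of the vertical correction $-\epsilon P$, which prevents the log canonical divisor from being a literal pullback from $Z$; carefully absorbing this correction through the canonical bundle formula and verifying that the resulting $(Z,B_Z)$ remains klt with $K_Z+B_Z$ big is the technical heart of the argument.
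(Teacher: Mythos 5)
Your opening reduction is essentially the paper's: using $K_X+B\sim_\Q P+G+L$ and $\Supp P=\Supp\rddown{B}$ to trade $K_X+B+\epsilon G+\epsilon L$ for a $\Q$-linearly equivalent klt log divisor $K_X+\Delta+(\text{semi-ample and general stuff})$ is exactly how the paper begins (it writes the same thing as $(1+\epsilon)(K_X+B-\tfrac{\epsilon}{1+\epsilon}P)$ and scales $G$ by $\tfrac{\alpha}{1+\epsilon}$; the normalization is immaterial). The gap is in the second half. Proposition \ref{p-klt-trivial} requires the log canonical divisor to be $\R$-linearly equivalent to the pullback of a big divisor from $Z$, i.e.\ an lc-trivial fibration over $Z$, and that is precisely what fails here: $K_X+\Delta+A^*+\epsilon L\sim_\Q (1+\epsilon)f^*M_Z-\epsilon P$, and $P$, though vertical, is not the pullback of any $\R$-Cartier divisor on $Z$ (its components are the components of $\rddown{B}$, which lie over proper closed subsets of $Z$ of arbitrary codimension and in any case are not full fibres). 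Triviality on the general fibre is not enough to invoke the Ambro/Fujino--Gongyo canonical bundle formula in the form used in Proposition \ref{p-klt-trivial}, so ``absorbing the correction through the canonical bundle formula'' is not a step you can defer -- it is the whole problem, and over the base $Z$ it cannot be done.

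The paper's resolution is Proposition \ref{p-m-model}, which is engineered for exactly this situation: it assumes only that $K_X+\Delta\sim_\Q 0$ over a non-empty open $U\subseteq Z$ and that $\kappa(K_X+\Delta)\ge\dim Z$, and it produces the LMMP on $K_X+\Delta$ with scaling of $\beta G$ together with its termination. Its proof is not a one-line application of the canonical bundle formula: one runs the scaling LMMP step by step, uses finite generation (Proposition \ref{p-fg}, resting on [BCHM]) to see that each nef threshold divisor is semi-ample and hence that the thresholds $\lambda_i$ strictly decrease past every value, and only at the limit $\lambda$ applies Proposition \ref{p-klt-trivial} -- and even then over a new base $W_2$, birational to $Z$, constructed so that the vertical correction has been contracted and the log divisor genuinely becomes a pullback. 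Relatedly, your final appeal to [B-lc-flips, Theorem 1.9] needs a log minimal model of the pair at the limiting scaling coefficient $\lambda=\lim\lambda_i$ (together with $\lambda\neq\lambda_j$ for all $j$), not merely a good log minimal model of $(X,B+\epsilon G+\epsilon L)$; supplying that is again the content of Proposition \ref{p-m-model}. So the correct fix is to verify the hypotheses of Proposition \ref{p-m-model} (take $U$ to be the complement in $Z$ of $\Supp(A_Z+L_Z)\cup f(G)\cup f(\rddown{B})$, over which $G$, $L$, $P$ vanish and $K_X+\Delta\sim_\Q 0$, and note $\kappa(K_X+\Delta)\ge\kappa(f^*M_Z)=\dim Z$) and cite it, rather than Proposition \ref{p-klt-trivial}.
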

\begin{proof}
Since $0<\epsilon\ll 1$, $G$ is semi-ample, and $\Supp L$ does not contain any lc centre of 
$(X,B)$, we can assume that $(X,B+\epsilon G+\epsilon L+\alpha G)$ is dlt. 
On the other hand, since 
$$
K_X+B\sim_\Q P+G+L
$$ 
we can write 
 \begin{equation*}
\begin{split}
K_X+B+\epsilon G+\epsilon L & \sim_\Q K_X+B+\epsilon G+\epsilon L+\epsilon P-\epsilon P\\
 & \sim_\Q (1+\epsilon)(K_X+B)-\epsilon P\\
 & \sim_\Q (1+\epsilon)(K_X+B-\frac{\epsilon}{1+\epsilon}P)
\end{split}
\end{equation*}
Thus, it is enough to run an LMMP on $K_X+\Delta:=K_X+B-\frac{\epsilon}{1+\epsilon}P$ with scaling 
of $\beta G$ where $\beta=\frac{\alpha}{1+\epsilon}$. Since $\Supp P=\Supp \rddown{B}$, 
we can assume that $(X,\Delta+\beta G)$ is klt. Moreover, there is a non-empty open subset $U\subset Z$ 
such that $K_X+\Delta\sim_\Q 0$ over $U$ and $G=0$ over $U$.
 Now apply Proposition \ref{p-m-model}.\\
\end{proof}

\begin{lem}\label{l-4}
Assume that $(X,B)$ and $f$ satisfy the assumptions and the properties listed in Lemma \ref{l-1}. 
Assume that $X'$ is a log minimal model of 
$(X,B+\epsilon G+\epsilon L)$ for some rational number $0<\epsilon\ll 1$ obtained as in Lemma \ref{l-2}. 
Then, we can run an LMMP on $K_{X'}+B'$ with scaling of $\epsilon G'+\epsilon L'$ such that 
$\lim \lambda_i=0$ where $\lambda_i$ are the numbers appearing in the LMMP with scaling.
\end{lem}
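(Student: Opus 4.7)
Put $C := \epsilon G' + \epsilon L'$ on $X'$. Since $(X', B'+C)$ is a good log minimal model of $(X, B + \epsilon G + \epsilon L)$ by Lemma \ref{l-2}, the divisor $K_{X'} + B' + C$ is nef (indeed semi-ample), so the scaling number $\lambda_1 := \inf\{t \geq 0 : K_{X'}+B'+tC \text{ nef}\}$ satisfies $\lambda_1 \leq 1$, and is rational by [\ref{B-m-model}, Lemma 3.1]. The crucial identity
$$
K_X + B + tC \sim_\Q (1+t\epsilon)(K_X + \Delta_t), \qquad \Delta_t := B - \tfrac{t\epsilon}{1+t\epsilon}P,
$$
in which $(X, \Delta_t)$ is klt for every $t > 0$ (because $\Supp P = \Supp \rddown{B}$), translates the nefness and extremal-ray conditions for $K+B$ into the corresponding conditions for the klt pair $(X,\Delta_t)$.

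The plan is to construct the LMMP inductively. At step $i$, if $K_{X'_i}+B'_i$ is nef we stop with $\lambda_i = 0$; otherwise pick a $(K+B)$-negative extremal ray $R_i$ satisfying $(K+B+\lambda_i C_i)\cdot R_i = 0$. By the identity above, $R_i$ is $(K+\Delta_{\lambda_i})$-trivial on $X'_i$, so the associated flip or divisorial contraction exists via BCHM-style flip existence applied to the klt pair $(X'_i, \Delta'_{\lambda_i, i})$; this step preserves $\Q$-factoriality and dltness of $(X'_i, B'_i)$. In this way we produce a sequence $X'_1 \bir X'_2 \bir \cdots$ with corresponding rational scaling numbers $\lambda_1 \geq \lambda_2 \geq \cdots \geq 0$.

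To show $\lim \lambda_i = 0$, I would argue by contradiction: suppose $\lambda := \lim \lambda_i > 0$, so $\lambda_i > \lambda$ strictly for every $i$. Setting $\mu_i := \tfrac{\lambda_i \epsilon}{1+\lambda_i \epsilon}$, each $(X'_i, \Delta'_{\mu_i, i})$ is klt and $\Q$-factorial with $K + \Delta_{\mu_i}$ nef and big. Applying the finiteness of ample (canonical) models from [\ref{BCHM}, Corollary 1.1.5] to the one-parameter klt family $\{(X'_1, \Delta'_{\mu,1}) : \mu \in [\tfrac{\lambda\epsilon}{1+\lambda\epsilon}, \tfrac{\epsilon}{1+\epsilon}]\}$, only finitely many birational models can arise as ample/log minimal models as $\mu$ varies in this rational interval. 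Since the LMMP produces infinitely many distinct models $X'_i$ (successive steps being non-trivial flips or divisorial contractions, which strictly decrease a suitable discrete invariant), this yields the desired contradiction.

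The main obstacle is to justify that each $X'_i$ genuinely contributes a distinct model in the BCHM finiteness count, given that the step $X'_i \bir X'_{i+1}$ is $(K+\Delta_{\lambda_i})$-trivial (a flop for that specific parameter) rather than strictly $(K+\Delta_{\mu})$-negative for a fixed $\mu$. The point is that as $\mu$ varies through the $\mu_i$, the canonical (ample) model $\Proj R(K+\Delta_\mu)$ is governed by the Nef chamber structure: at each non-trivial step of the LMMP, the ample model changes because $\lambda_i$ strictly decreases, so infinitely many steps force infinitely many distinct ample models, violating the polytope finiteness. Carrying out this discrepancy and chamber-decomposition bookkeeping — and verifying that existence of LMMP steps does not require additional hypotheses in the dlt (non-klt) setting — is the delicate part.
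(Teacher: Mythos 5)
Your reduction to the klt pairs $(X,\Delta_t)$ with $\Delta_t=B-\tfrac{t\epsilon}{1+t\epsilon}P$ is exactly the translation the paper uses (it is the computation in Lemma \ref{l-2}), and the existence of each individual step of the LMMP is indeed unproblematic for the reason you give. The genuine gap is in the argument that $\lim\lambda_i=0$. You assert that each $K_{X_i'}+\Delta_{\mu_i,i}'$ is nef and \emph{big} and then invoke finiteness of ample models from [\ref{BCHM}]. But in the setting of Lemma \ref{l-1} we only have $K_X+B\sim_\Q f^*M_Z$ with $M_Z$ big on $Z$; when $\dim Z<\dim X$ (the case that the induction in the proof of Theorem \ref{t-main} genuinely needs), $f^*M_Z$ is not big on $X$, and neither is $K_X+\Delta_t\sim_\Q\tfrac{1}{1+t\epsilon}(K_X+B+t\epsilon G+t\epsilon L)$, since $G$ and $L$ are pullbacks of divisors from $Z'$ up to the vertical divisor $P$. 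Moreover the boundary $\Delta_t$ contains no ample component ($G$ is only semi-ample, being a pullback), so the hypotheses of [\ref{BCHM}, Corollary 1.1.5] fail on both counts. Without bigness, finiteness of models is not available, and your contradiction argument does not get off the ground.

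The paper circumvents precisely this obstruction by exploiting the relative structure over $Z$: it fixes a sequence $\epsilon=a_1>a_2>\cdots\to 0$ and, for each $j$, produces a \emph{terminating} LMMP that lowers the nef threshold from $a_j$ to $a_{j+1}$, by applying Proposition \ref{p-m-model}. That proposition replaces global finiteness of models with (i) finite generation of $R(K_X+\Delta)$ and semi-ampleness via Proposition \ref{p-fg}, which give a contraction $X_i\to V_i$ birational to $Z$ over which $K+\Delta$ is relatively big, and (ii) Proposition \ref{p-klt-trivial} (canonical bundle formula plus [\ref{BCHM}] applied on the base, where $M_Z$ \emph{is} big) to handle the limiting coefficient. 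Concatenating these segments yields the LMMP with $\lim\lambda_i=0$ by construction, with no need for a finiteness-of-models argument on $X$ itself. To repair your proof you would have to replace the appeal to [\ref{BCHM}, Corollary 1.1.5] with an argument of this relative type; as written, the key step is unjustified.
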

\begin{proof}
Let $\epsilon=a_1>a_2>a_3>\cdots$ be a strictly decreasing sequence of rational 
numbers approaching zero.  As in the proof of Lemma \ref{l-2}, we 
can write 
$$
K_{X}+B+a_2 G+a_2 L\sim_\Q r(K_{X}+\Delta)
$$ 
and 
$$
K_{X}+B+\epsilon G+\epsilon L\sim_\Q r(K_{X}+\Delta+\tau G+\tau L)
$$
where $r,\tau>0$ are rational, $\Delta\ge 0$, and $(X,\Delta+\tau G+\tau L)$ and 
$(X',\Delta'+\tau G'+\tau L')$ are klt.

By Lemma \ref{l-2}, $X\bir X'$ is obtained by 
an LMMP on $A+L+\epsilon G+\epsilon L$ with scaling of some $\alpha G$. 
It is also an LMMP on $A+L+\epsilon L$.
Let $U\subset Z$ be the complement of $\Supp (A_Z+L_Z)\cup f(G)$. 
Then, $X\bir X'$ is an isomorphism when restricted to $f^{-1}U$. So, 
$X'\bir Z$ is a projective morphism with connected fibres over $U$.
Moreover, $G=0=L$ and  
$K_X+\Delta\sim_\Q 0$ over $U$ which implies that $G'=0=L'$ and 
$K_{X'}+\Delta'\sim_\Q 0$  over $U$. 
In addition, 
$$
\kappa(K_{X'}+\Delta')\ge \kappa(K_{X}+\Delta)=\dim Z
$$ 
where the inequality follows from the fact that $X'\bir X$ does not contract divisors and 
$K_{X'}+\Delta'$ is the pushdown of $K_{X}+\Delta$ (for each sufficiently divisible integer 
$m$ we have an inclusion 
$H^0(X,m(K_{X}+\Delta))\subseteq H^0(X',m(K_{X'}+\Delta')$).
Therefore, by Proposition \ref{p-m-model}, 
we can  run an LMMP on $K_{X'}+\Delta'$ with scaling of 
$\tau G'+\tau L'$ which terminates. This corresponds to an LMMP on 
$K_{X'}+B'+a_2 G'+a_2 L'$ with scaling of $(a_1-a_2)G'+(a_1-a_2)L'$ which terminates on 
some model $X''$.

Next, using the same arguments as above we can run an LMMP on 
$K_{X''}+B''+a_3 G''+a_3 L''$ with scaling of $(a_2-a_3)G''+(a_2-a_3)L''$ which terminates. 
Continuing this process gives the desired LMMP
on $K_{X'}+B'$ with scaling of $\epsilon G'+\epsilon L'$ such that 
$\lim \lambda_i=0$ where $\lambda_i$ are the numbers appearing in the LMMP.\\
\end{proof}

The next lemma will be used to do induction on dimension in the proof of Theorem \ref{t-main}.

\begin{lem}\label{l-3}
Assume that Theorem \ref{t-main} holds in dimension $\le d-1$. 
Let $(X,B)$ and $f$ satisfy the assumptions 
and the properties listed in Lemma \ref{l-1} where $d=\dim X$. Let $U\subset Z$ be a non-empty open set   
and $\phi\colon X\bir X'$ a birational map satisfying: 

$\bullet$ $\phi^{-1}$ does not contract divisors,

$\bullet$  $\phi|_{f^{-1}U}$ is an isomorphism,

$\bullet$ the generic point of each lc centre of $(X,B)$ is in ${f^{-1}U}$,

$\bullet$ $({X'},B')$ is $\Q$-factorial dlt,

$\bullet$ the generic point of each lc centre of $({X'},B')$ is in $\phi({f^{-1}U})$.\\\\ 
Let $S$ be an lc centre of $(X,B)$, $S'$ its birational transform on $X'$, and by adjunction 
define $K_{S'}+B_{S'}':=(K_{X'}+B')|_{S'}$. Then, $(S',B_{S'}')$ has a good log minimal model.  
\end{lem}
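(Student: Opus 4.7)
My strategy is to apply Theorem~\ref{t-main} in dimension $\le d-1$ to $(S, B_S)$, equipped with a morphism coming from the Stein factorization of $f|_S$, and then transfer the resulting good log minimal model to $(S', B_{S'}')$ by working on a common log smooth model.

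Since Lemma~\ref{l-1} is in force, every lc centre of $(X,B)$ is vertical$/Z$, so $T := f(S) \subsetneq Z$. Stein-factorize $f|_S$ as $S \to T^{*} \to T$, with $S \to T^{*}$ a contraction to a normal projective variety and $T^{*} \to T$ finite; set $q \colon T^{*} \to Z$ and $M_{T^{*}} := q^{*} M_Z$. Writing $M_Z \sim_\Q A_Z + L_Z$ as in Lemma~\ref{l-1}, $q^{*} A_Z$ is ample on $T^{*}$ (pullback of an ample divisor under a finite surjection onto $T$) and $q^{*} L_Z \ge 0$, so $M_{T^{*}}$ is big. Moreover ${\bf B_+}(M_{T^{*}}) \subseteq \Supp q^{*} L_Z$ contains no image of an lc centre of $(S, B_S)$, since such a centre $V \subseteq S$ is an lc centre of $(X, B)$ and hence $f(V) \not\subseteq \Supp L_Z$ by Lemma~\ref{l-1}. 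By $\Q$-factorial dlt adjunction, $(S, B_S)$ is $\Q$-factorial dlt and $K_S + B_S \sim_\Q (S \to T^{*})^{*} M_{T^{*}}$, so Theorem~\ref{t-main} in dimension $\le d-1$ yields a good log minimal model of $(S, B_S)$.

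To pass to $(S', B_{S'}')$, I would exploit that the hypotheses on $\phi$ force the induced birational map $S \bir S'$ to be an isomorphism on $U_S := S \cap f^{-1} U$, which is naturally identified with $U_{S'} \subseteq S'$ and contains the generic point of every lc centre of both pairs. Taking a common log resolution $W$ of $X$ and $X'$ and letting $\widetilde{S} \subset W$ denote the common birational transform of $S$ and $S'$, I construct a boundary $B_{\widetilde{S}}$ so that $(\widetilde{S}, B_{\widetilde{S}})$ is a log smooth model in the sense of~\ref{d-mws} of both $(S, B_S)$ and $(S', B_{S'}')$ simultaneously. Re-running the argument of the previous paragraph on $(\widetilde{S}, B_{\widetilde{S}})$, with the induced morphism $\widetilde{S} \to T^{*}$, produces a good log minimal model of $(\widetilde{S}, B_{\widetilde{S}})$; by [\ref{B-lc-flips}, Remark~2.8] this descends to a good log minimal model of $(S', B_{S'}')$.

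The main obstacle will be this final step: on $(\widetilde{S}, B_{\widetilde{S}})$, the exceptional correction $E$ in the relation $K_{\widetilde{S}} + B_{\widetilde{S}} = (\widetilde{S} \to S)^{*}(K_S + B_S) + E$ needs to be absorbed without damaging the bigness of the target class or the ${\bf B_+}$ condition. The crucial observation is that $E$ is supported outside the preimage of $U_S$, hence outside the generic point of every lc centre of $(\widetilde{S}, B_{\widetilde{S}})$; this localisation is exactly what lets the previous argument be re-run verbatim.
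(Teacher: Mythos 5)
Your first paragraph is sound and is essentially how the paper itself treats an lc centre directly: restricting to $S$, Stein--factorizing, and checking that the pulled-back divisor on $T^{*}$ is big with ${\bf{B_+}}$ avoiding the images of the lc centres of $(S,B_S)$ does give a good log minimal model of $(S,B_S)$ by induction. The gap is in the transfer to $(S',B_{S'}')$. Your plan rests on the existence of a single boundary $B_{\widetilde S}$ making $(\widetilde S,B_{\widetilde S})$ a log smooth model of $(S,B_S)$ and of $(S',B_{S'}')$ \emph{simultaneously}, and such a boundary does not exist in general. If $C$ is a prime divisor on $\widetilde S$ which is exceptional over neither $S$ nor $S'$, the two crepant relations would force $\mult_C B_{\widetilde S}=1-a(C,S,B_S)=1-a(C,S',B_{S'}')$; but the hypotheses of the lemma only control $\phi$ over $f^{-1}U$ (where it is an isomorphism) and say nothing about log discrepancies elsewhere --- $\phi$ is an arbitrary birational map whose inverse contracts no divisors, not a $(K_X+B)$-crepant or $(K_X+B)$-nonpositive map, so $a(C,S,B_S)$ and $a(C,S',B_{S'}')$ can differ on divisors lying over $Z\setminus U$. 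In other words $(S,B_S)$ and $(S',B_{S'}')$ agree only over $U_V$, and a good log minimal model of the former does not descend to one of the latter.

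Second, even granting a common log smooth model, ``re-running the argument verbatim'' on $(\widetilde S,B_{\widetilde S})\to T^{*}$ is not available: Theorem \ref{t-main} requires $K+B$ to be $\Q$-linearly equivalent to an honest pullback of a big divisor from the base, whereas here $K_{\widetilde S}+B_{\widetilde S}=e^*(K_{S'}+B_{S'}')+E'$ with $K_{S'}+B_{S'}'\sim_\Q (A'+L')|_{S'}$, and $A',L'$ are birational transforms on $X'$ which are no longer pullbacks from $Z$. This is exactly where the paper's proof does its real work: it runs an LMMP over $V$ on a log smooth model of $(S',B_{S'}')$ to reach $\overline T$ with $K_{\overline T}+B_{\overline T}'\sim_\Q\overline g^*M_{\overline V}'$ for a birational modification $p\colon\overline V\to V$, and then uses the negativity lemma (comparing the pullbacks $A_W'$ and $A_W$ on $W$) to prove $M_{\overline V}'\ge p^*A_V$, hence that $M_{\overline V}'$ is big and that ${\bf{B_+}}(M_{\overline V}')$ misses the images of the lc centres; only then can Theorem \ref{t-main} in dimension $<d$ be invoked, on $\overline g\colon\overline T\to\overline V$ rather than on a map to $T^{*}$. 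Your closing observation that the exceptional correction is supported away from the preimage of $U_S$ is a necessary ingredient for all of this, but it is far from sufficient to produce the required big divisor on the base.
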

\begin{proof}
Let $\psi\colon S\bir S'$ be the induced birational map. Let  
$g\colon S\to V$ be the contraction given by the Stein factorization of 
$S\to f(S)$, and let $U_V\subset V$ be the inverse image of $U$. 
By assumptions, $\psi$ is an isomorphism when restricted to $g^{-1}U_V$.
Moreover, by construction, the generic point of each lc centre of $(S',B'_{S'})$ is 
inside $\psi(g^{-1}U_V)$. 

Let $W\to X$ and $W\to X'$ be a common log resolution so that it induces a 
common log resolution $h\colon T\to S$ and $e\colon T\to S'$ where $T\subset W$ is the birational 
transform of $S$. Such $W$ exists because $({X},B)$ is $\Q$-factorial dlt and 
$X\bir X'$ is an isomorphism near the generic point of $S$.
Let $B_T'$ be a rational boundary so that $(T,B_T')$ is a log smooth model of $(S',B_{S'}')$, as 
defined in \ref{d-mws},  
and so that each lc centre of $(T,B_T')$ maps onto a lc centre of $(S',B_{S'}')$. 
This ensures 
that the generic point of each lc centre of $(T,B_T')$ is mapped into $U_V$.
 
Now run an LMMP$/V$ on $K_T+B_T'$ with scaling of an ample divisor. By adjunction define 
$K_S+B_S=(K_X+B)|_S$. Since $\psi$ is an isomorphism on $g^{-1}U_V$ and since 
$K_S+B_S\sim_\Q 0/V$, the pair $(S',B_{S'}')$ is a weak lc model of $(T,B_T')$ 
over $U_V$. Thus, the LMMP terminates over $U_V$ by 
[\ref{B-lc-flips}, Corollary 3.7 and Theorem 1.9] and
we reach a model $\overline{T}$ on which $K_{\overline{T}}+B_{\overline{T}}'\sim_\Q 0$ 
over $U_V$. Moreover, the generic point of each lc centre of 
$({\overline{T}},B_{\overline{T}}')$ is mapped into $U_V$. Therefore,  by 
[\ref{B-lc-flips}, Theorem 1.4], we can run an LMMP$/V$ on  
$K_{\overline{T}}+B_{\overline{T}}'$ which 
terminates with a good log minimal model of $(T,B_T')$ over $V$. Replacing $\overline{T}$ with the minimal model 
we may assume that $K_{\overline{T}}+B_{\overline{T}}'$ is 
semi-ample$/V$. Let $\overline{g}\colon \overline{T}\to \overline{V}/V$ be the contraction 
defined by $K_{\overline{T}}+B_{\overline{T}}'$. 

Replacing $M_Z$ by $A_Z+L_Z$ enable us to assume that $M_Z=A_Z+L_Z$. Then, 
$$
K_S+B_S\sim_\Q g^*M_V=g^*(A_V+L_V)
$$ 
where $M_V$, $A_V$, and $L_V$ are the pullbacks on $V$ of 
$M_Z$, $A_Z$, and $L_Z$ respectively. 
On the other hand, 
$$
K_{S'}+B'_{S'}=(K_{X'}+B')|_{S'}\sim_\Q (A'+L')|_{S'}
$$ 
and 
$$
K_T+B_T'=e^*(K_{S'}+B'_{S'})+E_T
$$ 
for some $E_T\ge 0$ which is exceptional$/S'$. Writing 
$$
M_T'=e^*(A'+L')|_{S'}+E_T
$$
we have 
$$
K_T+B_T'\sim_\Q M_T' ~~~\mbox{and}~~~ K_{\overline{T}}+B_{\overline{T}}'\sim_\Q M_{\overline{T}}'
$$

The following diagram shows some of the 
objects and maps we have 
constructed so far:
$$
\xymatrix{
 & T\ar[ld]_h\ar[rd]^e\ar@{-->}[d] \ar[rrrr] &          &&     & W\ar[rd]\ar[ld] &   \\
S\ar[rdd]_g & \overline{T}\ar[d] & S'\ar@{-->}[ldd] &&   X \ar[rdd]_f&   & X'\ar@{-->}[ldd]\\
&\overline{V}\ar[d]&   &&&&\\
&V \ar[rrrr]&                                       &&     & Z &
} 
$$

By construction,  
$E_{\overline{T}}$ is mapped into $V\setminus U_V$ since the above LMMP contracts 
any component of $E_T$ whose generic point is mapped into $U_V$. Moreover,
over $U_V$, $M_{\overline{T}}'$ is nothing but the pullback of $M_V=A_V+L_V$. 
Therefore, we can write  
$$
K_{\overline{T}}+B_{\overline{T}}'\sim_\Q M_{\overline{T}}'= \overline{g}^*M_{\overline{V}}'
$$ 
such that $M_{\overline{V}}'\ge 0$ 
and $M_{\overline{V}}'=p^*M_V$ over $U_V$ if we denote $\overline{V}\to V$ by $p$. 
Note that $p$ is birational and it is an isomorphism over $U_V$.

Now let $A_W$ on $W$ be the pullback of $A$ and similarly let $A_W'$ be the 
pullback of $A'$. Since $A_W$ is nef and $\phi^{-1}$ does not contract divisors, 
by the negativity lemma we have $A_W'\ge A_W$. By construction, 
$M_T'\ge A_W'|_T$ hence $M_T'\ge A_W|_T$ which in turn implies that 
$M_{\overline{T}}'\ge \overline{g}^*p^*A_V$ so $M_{\overline{V}}'\ge p^*A_V$. 
Let $N_{\overline{V}}'=M_{\overline{V}}'-p^*A_V$. Over $U_V$ we have 
$N_{\overline{V}}'=p^*L_V$, and the generic point of each lc centre of 
$({\overline{T}},B_{\overline{T}}')$ is mapped into $U_V$ 
hence $\Supp N_{\overline{V}}'$ does not contain the image 
of any lc centre of $({\overline{T}},B_{\overline{T}}')$.

Finally, by construction, $M_{\overline{V}}'$ is ample$/V$ hence $N_{\overline{V}}'$ is 
also ample$/V$. So, we can write 
$M_{\overline{V}}'\sim_\Q A_{\overline{V}}'+L_{\overline{V}}'$
where $A_{\overline{V}}'\sim_\Q p^*A_V+\delta N_{\overline{V}}'$ is ample and  
$L_{\overline{V}}'=(1-\delta)N_{\overline{V}}'$ for some small rational number $\delta>0$. 
Choosing $A_{\overline{V}}'$ general 
makes sure that 
$\Supp (A_{\overline{V}}'+L_{\overline{V}}')$ hence ${\bf{B_+}}(M_{\overline{V}}')$ 
does not contain the image 
of any lc centre of $({\overline{T}},B_{\overline{T}}')$.
Since we are assuming Theorem \ref{t-main} in dimension $<d$, 
the pair $({\overline{T}},B_{\overline{T}}')$ has a good log minimal model which is also a good log minimal 
model of $(S',B_{S'}')$.
\end{proof}

\vspace{0.5cm}
\section{Proof of Main results}

In this section, we will prove the main theorem and its corollaries.

\begin{proof}(of Theorem \ref{t-main})
Assume that the theorem holds in dimension $\le d-1$. 
Let $(X,B)$ and $f$ be as in the theorem with $\dim X=d$. 
We can take a dlt blowup hence assume that $(X,B)$ is $\Q$-factorial dlt. 
Moreover, by taking the Stein factorization of $f$ we can assume that $f$ is a 
contraction.

First assume that there is a lc centre of $(X,B)$ mapping onto $Z$. In this case, there is a 
component $S$ of $\rddown{B}$ such that $S$ is mapped onto $Z$. Let 
$g\colon S\to Z$ be the induced morphism. By adjunction 
define $K_S+B_S=(K_X+B)|_S$. Then, $(S,B_S)$ and $g$ satisfy the assumptions of 
Theorem \ref{t-main} since $K_S+B_S\sim_\Q g^*M_Z$ and no lc centre of 
$(S,B_S)$ is mapped into ${\bf{B_+}}(M_{Z})$. 
By induction, $(S,B_S)$ has a good log minimal model. 
In particular, the algebra $R(K_S+B_S)$ is finitely generated over $k$. By [\ref{B-lc-flips}, Lemma 6.4], 
the algebra $R(M_Z)$ is also finitely generated which in turn implies that the algebra 
$R(K_X+B)$ is finitely generated as $f$ is a contraction. Now by Proposition \ref{p-fg}, $(X,B)$ has a good 
log minimal model. So from now on we assume that every lc centre of $(X,B)$ 
is vertical$/Z$. We will construct a log minimal model and use Lemma \ref{l-good-m-model} below 
to deduce the existence of a good log minimal model.

We can replace $(X,B)$ and $f$ so that they satisfy the properties listed in Lemma \ref{l-1}. 
Let $\epsilon_1>\epsilon_2>\cdots $ be a sequence of sufficiently small rational 
numbers with $\lim \epsilon_j=0$. By Lemma \ref{l-2}, for each $\epsilon_j$, 
we can run an LMMP on 
$K_X+B+\epsilon_j G+\epsilon_j L$ with scaling of $\alpha G$ for some large $\alpha$ such that 
the LMMP ends up with a good log minimal model $(X_j',B_j'+\epsilon_j G_j'+\epsilon_j L_j')$ 
of $(X,B+\epsilon_j G+\epsilon_j L)$. 
Any prime divisor contracted by 
$X\bir X_j'$ is a component of $A+L$ since the LMMP is also an LMMP on $A+L+\epsilon_j L$. 
Thus, perhaps after replacing the sequence $\epsilon_1, \epsilon_2,\dots$ with a subsequence, 
we could assume that the maps $X\bir X_j'$ contract 
the same prime divisors hence assume that the $X_j'$ are all isomorphic in codimension one. 
This also implies that $K_{X_1'}+B_1'$ is a limit of movable divisors.

Let $X':=X_1'$, $B':=B_1'$, $G':=G_1'$, and $L':=L_1'$, etc. By Lemma \ref{l-4}, we can run an LMMP on $K_{X'}+B'$ 
with scaling of $\epsilon_1 G'+\epsilon_1 L'$ such that 
$\lim \lambda_i=0$ where $\lambda_i$ are the numbers appearing in the LMMP. The LMMP does not contract 
any divisors because $K_{X'}+B'$ is a limit of movable divisors. For each $\epsilon_j$, there is a 
model $Y$ which appears in some step of the LMMP on $K_{X'}+B'$ and some $i$ such that 
$\lambda_i\epsilon_1\ge \epsilon_j\ge \lambda_{i+1}\epsilon_1$ and such that 
$$
K_Y+B_Y+\lambda_i\epsilon_1 G_Y+\lambda_i\epsilon_1 L_Y
$$ 
and 
$$
K_Y+B_Y+\lambda_{i+1} \epsilon_1 G_Y+\lambda_{i+1} \epsilon_1 L_Y
$$ 
are both nef. Now since $X_j'\bir Y$ is an isomorphism in codimension one, 
$(Y,B_Y+\epsilon_j G_Y+\epsilon_j L_Y)$ is also a log minimal model 
of $(X,B+\epsilon_j G+\epsilon_j L)$ so by replacing $X_j'$ with $Y$ we could assume 
that each of the $X_j'$ appears as a model in the LMMP on $K_{X'}+B'$. By redefining the notation 
we assume that the LMMP on $K_{X'}+B'$ is as $X'=X_1'\bir X_2'\bir X_3'\bir \cdots$. 

Assume that the LMMP on $K_{X'}+B'$ terminates. Then, we arrive at a model $X''$ on which  
all the divisors $K_{X''}+B''+\epsilon_j G''+\epsilon_j L''$ are semi-ample when $j\gg 0$. 
For any prime divisor $D$ on $X$ we have 
$$
a(D,X,B+\epsilon_j G+\epsilon_j L)\le a(D,X'',B''+\epsilon_j G''+\epsilon_j L'')\le a(D,X'',B'')
$$
hence taking limit gives
$$
a(D,X,B)\le a(D,X'',B'')
$$
Therefore, $(X'',B'')$ is a weak lc model of $(X,B)$ from which we can construct a log minimal model 
by [\ref{B-lc-flips}, Corollary 3.7]. So, it is enough to show that the LMMP terminates.

First we will show that the LMMP terminates near $\rddown{B'}$. Let $U=Z\setminus \Supp(A_Z+L_Z)$. 
By assumptions, the generic point of each lc centre of $(X,B)$ belongs to $f^{-1}U$. 
Since $X\bir X'$ is an 
LMMP on $A+L+\epsilon_1 L$, it is an isomorphism when restricted to  
$f^{-1}U$. On the other hand, since $X'=X_1'\bir X_i'$ is an LMMP on $A'+L'$, it 
is an isomorphism when restricted to the image of $f^{-1}U$ in $X'$. So, 
the rational maps $X\bir X_i'$ are all isomorphisms when restricted to 
$f^{-1}U$. Moreover, the generic point of each lc centre of $(X_i',B_i')$ belongs 
to the image of $f^{-1}U$ in $X_i'$.
This enables us to use Lemma \ref{l-3}. More precisely, 
let $S$ be a lc centre of 
$(X,{B})$ and $S_i'$ its birational transform on $X_i'$.
By adjunction define $K_{S_i'}+B_{S_i'}'=(K_{X_i'}+B_{i}')|_{S_i'}$. By Lemma \ref{l-3}, 
$({S_i'},B_{S_i'}')$ has a good log minimal model.

 The LMMP  on $K_{X'}+B'$ does not contract $S'=S_1'$. If $S'$ is a minimal lc centre, then by 
 [\ref{pl-flips}][\ref{Fujino-s-term}], the induced birational maps $S_i'\bir S_{i+1}'$ 
 are isomorphisms in codimension one for $i\gg 0$ and 
the LMMP induces an LMMP with scaling on some dlt blowup of $({S'_j},B'_{S_j'})$ for some $j$ 
(see [\ref{B-lc-flips}, Remark 2.10] for more information on this kind of reduction). Since  
$({S'_j},B'_{S_j'})$ has a log minimal model, the LMMP terminates by [\ref{B-lc-flips}, Theorem 1.9].
If $S'$ is not a minimal lc centre, by induction, we can assume that the LMMP on $K_{X'}+B'$ terminates near any lc centre of 
$({S'_j},B'_{S_j'})$ for any large $j$. Thus, again by [\ref{pl-flips}][\ref{Fujino-s-term}], 
the induced birational maps $S_i'\bir S_{i+1}'$ 
 are isomorphisms in codimension one for $i\gg 0$ and 
we may assume that we get an induced LMMP with scaling on some dlt blowup of  
$({S'_j},B'_{S_j'})$ for some $j$. The latter LMMP terminates 
for the same reasons as before. Therefore, we can assume that the 
LMMP on $K_{X'}+B'$ terminates near $\rddown{B'}$.

Outside $\rddown{B'}$ we have 
$$
G'+L'=P'+G'+L'\sim_\Q A'+L'\sim_\Q K_{X'}+B'
$$
Therefore, the LMMP on $K_{X'}+B'$ terminates everywhere otherwise   
the extremal ray in each step of the LMMP intersects $K_{X'}+B'$ negatively but 
intersects $G'+L'$ positively which is a contradiction. 
This completes the proof of the theorem.\\
\end{proof}

\begin{lem}\label{l-good-m-model}
Assume that Theorem \ref{t-main} holds in dimension $\le d-1$.
Let $(X,B)$ and $f$  satisfy the assumptions and properties listed in Lemma \ref{l-1} 
where $d=\dim X$. Then, any log minimal model of $(X,B)$ is good. 
\end{lem}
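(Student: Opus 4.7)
Let $(Y,B_Y)$ be a log minimal model of $(X,B)$. The plan is to verify the hypotheses of Proposition \ref{p-fg} for $(Y,B_Y)$, which will produce a good log minimal model of $(Y,B_Y)$. Goodness then transfers to $(Y,B_Y)$ itself because any two $\Q$-factorial dlt log minimal models of the same lc pair are isomorphic in codimension one and connected by a sequence of flops which preserve semi-ampleness of $K+B$.

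Set $U:=Z\setminus \mathrm{Supp}(A_Z+L_Z)$, shrinking if necessary so that $f$ descends to a rational map $f_Y\colon Y\bir Z$ that is a projective morphism with connected fibres over $U$. Since $M_Z\sim_\Q A_Z+L_Z$, we have $K_X+B\sim_\Q f^*M_Z\sim_\Q 0$ over $U$, and the corresponding relation $K_Y+B_Y\sim_\Q 0$ then holds over $U$ on $Y$ because the birational map $X\bir Y$ is an isomorphism over $f^{-1}U$ (the MMP being trivial where $K+B$ is already numerically zero). Moreover $\kappa(K_Y+B_Y)=\kappa(M_Z)=\dim Z$ because $M_Z$ is big. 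Thus three of the four hypotheses of Proposition \ref{p-fg} are already in place.

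The crux is finite generation of $R(K_Y+B_Y)$, which I would establish by induction on dimension using Lemma \ref{l-3}. By Lemma \ref{l-1} every lc centre of $(X,B)$ is vertical over $Z$ and its image in $Z$ is not contained in $\mathrm{Supp}(A_Z+L_Z)$, so the generic point of each lc centre of $(X,B)$ lies in $f^{-1}U$; the analogous condition on lc centres of $(Y,B_Y)$ follows since $X\bir Y$ is an isomorphism over $f^{-1}U$. Hence Lemma \ref{l-3} applied with $X':=Y$ yields, for every lc centre $S'$ of $(Y,B_Y)$ with adjoint boundary $B_{S'}'$, a good log minimal model of $(S',B_{S'}')$; in particular each $R(K_{S'}+B_{S'}')$ is finitely generated. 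Using the decomposition $K_Y+B_Y\sim_\Q P_Y+G_Y+L_Y$ from Lemma \ref{l-1}, with $G_Y$ semi-ample and $\mathrm{Supp}\,P_Y=\mathrm{Supp}\,\lfloor B_Y\rfloor$, finite generation on the boundary $\lfloor B_Y\rfloor$ lifts to $Y$ via standard restriction exact sequences: the stable base locus of $K_Y+B_Y$ is contained in $\lfloor B_Y\rfloor$, so sections of $m(K_Y+B_Y)$ are controlled by their restrictions to the boundary.

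Applying Proposition \ref{p-fg} then produces a good log minimal model of $(Y,B_Y)$, and by the opening remark $(Y,B_Y)$ is itself good. The principal obstacle is the lifting step: Lemma \ref{l-3} handles finite generation on lc centres, but transferring this to $Y$ requires careful use of the divisorial decomposition from Lemma \ref{l-1} -- especially the semi-ampleness of $G_Y$ together with the support condition $\mathrm{Supp}\,P_Y=\mathrm{Supp}\,\lfloor B_Y\rfloor$ -- in order to reduce the global generation question to the boundary generation question supplied by induction.
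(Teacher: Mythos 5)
The reduction to producing a single good log minimal model, the choice of $U=Z\setminus\Supp(A_Z+L_Z)$, and the verification that $K_Y+B_Y\sim_\Q 0$ over $U$ with $\kappa(K_Y+B_Y)=\dim Z$ are all fine and match the paper. The problem is the step you yourself flag as ``the principal obstacle'': deducing finite generation of $R(K_Y+B_Y)$ from finite generation of the algebras $R(K_{S'}+B'_{S'})$ on the lc centres. Under the hypotheses of Lemma \ref{l-1} every lc centre of $(X,B)$ --- hence of $(Y,B_Y)$ --- is \emph{vertical} over $Z$, so the restriction of $K_Y+B_Y\sim_\Q f_Y^*M_Z$ to a centre $S'$ only sees $M_Z$ along the proper subvariety $f_Y(S')\subsetneq Z$ and carries no information about the big part of $M_Z$. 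Concretely, for a component $S'$ of $\rddown{B_Y}$ the restriction map $H^0(Y,m(K_Y+B_Y))\to H^0(S',m(K_{S'}+B'_{S'}))$ has kernel $H^0(Y,m(K_Y+B_Y)-S')$, which your induction does not control; ``standard restriction exact sequences'' give finite generation only in pl-flip--type situations where one has finite generation of the \emph{restricted} algebra (the image, not the target) together with a relation such as $K+B\sim_\Q cS'$ over the base, and neither is available here. (A smaller inaccuracy: $\mathbf{B}(K_Y+B_Y)$ is a priori only contained in $\Supp P_Y\cup\Supp L_Y$, not in $\rddown{B_Y}$, since $L_Y$ need not be supported on the boundary.) Note that finite generation of $R(K_Y+B_Y)$ is part of the conclusion of Theorem \ref{t-main} itself, so this missing step is not a technicality but essentially the whole content; routing through Proposition \ref{p-fg} forces you to prove it first.

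The paper sidesteps finite generation entirely. It runs an LMMP on $K_X+B$ with scaling of an ample divisor to reach a log minimal model $X'$; since $K_X+B\sim_\Q A+L$ this is an LMMP on $A+L$, hence an isomorphism over $f^{-1}U$, so Lemma \ref{l-3} applies and gives a good log minimal model of each $(S',B'_{S'})$. This makes each $(K_{X'}+B')|_{S'}$ abundant, while $K_{X'}+B'$ itself is abundant because $\kappa=\kappa_\sigma=\dim Z$ (bigness of $M_Z$). Therefore $(X',B')$ is log abundant and the Fujino--Gongyo abundance theorem [\ref{Fujino-Gongyo}, Theorem 4.2] yields semi-ampleness of $K_{X'}+B'$ directly. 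To repair your argument, replace the appeal to Proposition \ref{p-fg} by this log-abundance criterion: the data you have assembled from Lemma \ref{l-3} is exactly what it needs.
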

\begin{proof}
Assume that $(X,B)$ has a log minimal model. It is enough to show that $(X,B)$ has 
one good log minimal model because then all the other log minimal models would be good. 
By [\ref{B-lc-flips}, Theorem 1.9], 
we can run an LMMP on $K_X+B$ 
with scaling of some ample divisor which ends up with a log minimal model $X'$. 
The LMMP is an LMMP on $A+L$. 
Let $U=Z\setminus \Supp(A_Z+L_Z)$. Then, the rational map $\phi\colon X\bir X'$ 
is an isomorphism when restricted to $f^{-1}U$. Moreover, the generic point of 
each lc centre of $(X,B)$ belongs to $f^{-1}U$ which in turn implies that the generic point of 
each lc centre of $(X',B')$ belongs to $\phi (f^{-1}U)$.
In view of 
$$
\kappa(K_{X'}+B')=\kappa(K_X+B)=\kappa_\sigma(K_X+B)=\kappa_\sigma(K_{X'}+B')
$$
and Lemma \ref{l-3}, 
the pair $(X',B')$ is log abundant. Therefore, by [\ref{Fujino-Gongyo}, Theorem 4.2], 
$K_{X'}+B'$ is semi-ample hence $(X',B')$ is a good log minimal model.\\
\end{proof}

\begin{proof}(of Corollary \ref{cor-1})
This follows from Theorem \ref{t-main} by taking $Z=X$ and $f$ to be the identity morphism.
\end{proof}

The next example shows that in Corollary \ref{cor-1} we cannot simply weaken the assumption 
that ${\bf{B_+}}(K_X+B)$ does not contain any lc centre of $(X,B)$ to  
$(X,B)$ being log big.

\begin{exa}\label{exa-log-big}
 Take a smooth projective variety $S$ with $\kappa(K_S)\ge 0$, let $Z$ be the projective cone 
over $S$ (with respect to some very ample divisor) and $X_2\to Z$ the blowup of the vertex. Identify $S$ with the 
exceptional divisor of $X_2\to Z$. Pick a smooth ample divisor $H$ on 
$S$ and let $\pi\colon X=X_1\to X_2$ be the blowup of $X_2$ along $H$. Now let 
$B=T+\epsilon E+\frac{1}{2}A$ where $T$ is the birational transform of $S$, $E$ is 
the exceptional divisor of $X\to X_2$, $\epsilon>0$ is small,  
and $A$ is the pullback of a sufficiently ample divisor on $Z$. Note that $T\to S$  
and $E\cap T\to H$ are isomorphisms hence $(K_X+B)|_T$ is big as it is identified with 
$K_S+\epsilon H$ via $T\to S$.  
Therefore, $(X,B)$ is log big: indeed $K_X+B$ is big, $(K_X+B)|_T$ is big, and 
$T$ is the only lc centre of $(X,B)$. 
Now run an LMMP 
on $K_X+B$ with scaling of some ample divisor. The LMMP would necessarily be over $Z$ because 
of the presence of $A$. It is quite likely that the first step of the LMMP 
is the contraction $X\to X_2$. In that case termination of the LMMP is essentially 
equivalent to existence of a minimal model of $S$. 
Even if $K_S$ is already nef, showing that $K_{X_2}+B_2$ is semi-ample amounts to showing 
semi-ampleness of $K_S$ which is the abundance problem. 
\end{exa}

\begin{proof}(of Corollary \ref{cor-2})
We can write 
$$
K_X+B+P\sim_\Q A+L
$$ 
where $A\ge 0$ is ample, $L\ge 0$, and $\Supp(A+L)$ does not contain any lc 
centre of $(X,B)$. For some small rational number $\epsilon>0$, we can write 
$$
K_X+\Delta\sim_\Q K_X+B+P+\epsilon (A+L)\sim_\Q (1+\epsilon)(K_X+B+P)
$$
such that $(X,\Delta)$ is lc and any lc centre of $(X,\Delta)$ is also a lc centre of 
$(X,B)$. The augmented base locus
$$
{\bf{B_+}}(K_X+\Delta)={\bf{B_+}}((1+\epsilon)(K_X+B+P))={\bf{B_+}}(K_X+B+P)
$$
does not contain any lc centre of $(X,\Delta)$. 
Now apply Corollary \ref{cor-1} to deduce that $R(K_X+\Delta)$ is finitely generated which in turn 
implies that $R(K_X+B+P)$ is also finitely generated.\\ 
\end{proof}


\vspace{2cm}

\flushleft{DPMMS}, Centre for Mathematical Sciences,\\
Cambridge University,\\
Wilberforce Road,\\
Cambridge, CB3 0WB,\\
UK\\
email: c.birkar@dpmms.cam.ac.uk\\
email: zh262@dpmms.cam.ac.uk


\begin{thebibliography}{99}

\bibitem{}\label{Ambro-adjunction}  {F. Ambro; {\emph{The moduli b-divisor of an lc-trivial fibration.}}
Compositio Math. 141 (2005), no. 2, 385-403.}

\bibitem{}\label{B-lc-flips} C. Birkar; \textit{Existence of log canonical flips and a special LMMP.}
Publ. Math. de l'IH\'ES, Volume 115, Issue 1 (2012), 325-368.

\bibitem{}\label{B-m-model}  {C. Birkar; {\emph{On existence of log minimal models.}}  {Compositio Math.} volume 
146 (2010), 919-928.}

\bibitem{}\label{BH}  {C. Birkar, Zhengyu Hu; {\emph{Polarized pairs, log minimal models, and Zariski decompositions.}} 
 To appear in Nagoya Math. J. This is essentially arXiv:1302.4015 but with a different title.}

\bibitem{}\label{BCHM}  {C. Birkar, P. Cascini, C. Hacon, J. M$^c$Kernan; {\emph{Existence of minimal models
for varieties of log general type.}}  J. Amer. Math. Soc. 23 (2010), 405-468. }

\bibitem{}\label{Cacciola} {S. Cacciola;
{\emph{On the semiampleness of the positive part of CKM Zariski decompositions}} arXiv:1011.6564v3.}


\bibitem{}\label{Fujino-s-term}  {O. Fujino; {\emph{Special termination and reduction to pl flips. In}} 
Flips for $3$-folds and $4$-folds, Oxford University Press (2007).}

 \bibitem{}\label{Fujino-Gongyo-lc-rings}  {O. Fujino, Y. Gongyo; {\emph{On log canonical rings.}} 
 To appear in the proceedings of Kawamata's 60th birthday conference. arXiv:1302.5194v2.}

\bibitem{}\label{Fujino-Gongyo-adjunction}  {O. Fujino, Y. Gongyo; {\emph{On canonical bundle formulas and subadjunctions.}} 
Michigan Math. J. 61 (2012), no. 2, 255--264.}

 \bibitem{}\label{Fujino-Gongyo}  {O. Fujino, Y. Gongyo; {\emph{Log pluricanonical representations and abundance conjecture.}} 
 To appear in Compositio Math. arXiv:1104.0361v3.}
 
 \bibitem{}\label{Gongyo-Lehmann}  {Y. Gongyo, B. Lehmann; {\emph{Reduction maps and minimal model theory.}} 
 Compositio Math. 149 (2013), No.2, 295--308.}
  
\bibitem{}\label{HMX} C. D. Hacon, J. M$^{\rm c}$Kernan, C. Xu; \emph{ACC for log canonical thresholds.}  arXiv:1208.4150v1.
 
\bibitem{}\label{Kollar-Mori}  {J. Koll\'ar, S. Mori; {\emph{Birational Geometry of Algebraic Varieties.}} Cambridge University
Press (1998).} 

 
\bibitem{}\label{pl-flips} V.V. Shokurov; \emph{Prelimiting flips}. 
Proc. Steklov Inst. Math. 240 (2003), 75-213.

\end{thebibliography}
\end{document}